\newtheorem{theorem}{Theorem}[section]
\newtheorem{lemma}[theorem]{Lemma}
\newcommand{\Int}[1]{\text{Int }#1}
\newcommand{\Span}[1]{\text{Span }#1}
\title{The $L_{p}$ Minkowski problem for polytopes for $0<p<1$}
\author{guangxian zhu}
\address{DEPARTMENT OF MATHEMATICS, POLYTECHNIC SCHOOL OF ENGINEERING, NEW YORK UNIVERSITY, SIX METROTECH, BROOKLYN, NY 11201, UNITED STATES}
\email{gz342@nyu.edu}
\thanks{2010 \emph{Mathematics Subject Classification}: 52A40.\\
\emph{Key Words}: Polytope, $L_{p}$ surface area measure, $L_{p}$ Minkowski problem, Monge-Amp\`{e}re equation}
\begin{document}
\maketitle

\begin{abstract}
Necessary and sufficient conditions are given for the existence of solutions to the discrete $L_{p}$ Minkowski problem for the critical case where $0<p<1$.
\end{abstract}

\section{Introduction}

The setting for this paper is $n$-dimensional Euclidean space $\mathbb{R}^{n}$. A \emph{convex body} in $\mathbb{R}^{n}$ is a compact convex set that
has non-empty interior. If $K$ is a convex body in $\mathbb{R}^{n}$, then the
\emph{surface area measure}, $S_{K}$, of $K$ is a Borel measure on the unit sphere, $S^{n-1}$, defined for a Borel $\omega\subset S^{n-1}$, by
$$
S_{K}(\omega)=\int_{x\in\nu_{K}^{-1}(\omega)}d\mathcal{H}^{n-1}(x),
$$
where $\nu_{K}:\partial'K\rightarrow S^{n-1}$ is the Gauss map of $K$, defined on $\partial'K$, the set of points of $\partial K$ that have a
unique outer unit normal, and $\mathcal{H}^{n-1}$ is ($n-1$)-dimensional Hausdorff measure.

The Minkowski problem is one of the cornerstones of the classical Brunn-Minkowski theory: What are necessary and sufficient conditions on a finite Borel measure $\mu$ on $S^{n-1}$ so that it is the surface area measure of a convex body in $\mathbb{R}^{n}$?

More than a century ago, Minkowski himself solved his problem for the case where the given measure is discrete \cite{MIN}. The complete solution to this problem for
arbitrary measures was given by Aleksandrov, and Fenchel and Jessen (see, e.g., \cite{SCH}): If $\mu$ is not concentrated on a great subsphere of $S^{n-1}$, then $\mu$ is the surface area measure of a convex body if and only if
$$
\int_{S^{n-1}}ud\mu=0.
$$

In \cite{LUT},  Lutwak showed that there is an $L_{p}$ analogue of the surface area measure and posed the associated $L_{p}$ Minkowski problem which has the classical Minkowski problem as an important case. If $p\in\mathbb{R}$ and $K$ is a convex body in $\mathbb{R}^{n}$ that contains the origin in its interior, then the $L_{p}$ surface area measure,
$S_{p}(K,\cdot)$, of $K$ is a Borel measure on $S^{n-1}$ defined for a Borel $\omega\subset S^{n-1}$, by
$$
S_{p}(K,\omega)=\int_{x\in\nu_{K}^{-1}(\omega)}(x\cdot\nu_{K}(x))^{1-p}d\mathcal{H}^{n-1}(x).
$$
Obviously, $S_{1}(K,\cdot)$ is the classical surface area measure of $K$. In recent years, the $L_{p}$ surface area measure appeared in, e.g., \cite{ADA, BG, CG, GM, H1, HP1, HP2, HENK, LU1, LU2, LR, LYZ1, LYZ2, LYZ3, LYZ6, LZ, NAO, NR, PAO, PW, ST3}.

Today, the $L_{p}$ Minkowski problem is one of the central problems in convex geometric analysis. It can be stated in the following way:\\ \\
\textbf{$L_{p}$ Minkowski problem:} For fixed $p$, what are necessary and sufficient conditions on a finite Borel measure $\mu$ on $S^{n-1}$ so that $\mu$ is the $L_{p}$ surface area measure of a convex body in $\mathbb{R}^{n}$?

When $\mu$ has a density $f$, with respect to spherical Lebesgue measure, the $L_{p}$ Minkowski problem involves establishing existence for the
Monge-Amp\`{e}re type equation:
\begin{equation}\tag{1.1}\label{Equation 1.1}
h^{1-p}\det(h_{ij}+h\delta_{ij})=f,
\end{equation}
where $h_{ij}$ is the covariant derivative of $h$ with respect to an orthonormal frame on $S^{n-1}$ and $\delta_{ij}$ is the Kronecker delta.

Obviously, the $L_{1}$ Minkowski problem is the classical Minkowski problem. Establishing existence and uniqueness for the solution of the classical Minkowski problem was done by Aleksandrov, and Fenchel and Jessen (see, e.g., \cite{SCH}). When $p\neq1$, the $L_{p}$ Minkowski problem has been studied by, e.g., Lutwak \cite{LUT}, Lutwak and Oliker \cite{LO}, Guan and Lin \cite{GL}, Chou and Wang \cite{CW}, Hug, et al. \cite{HLYZ2}, B\"{o}r\"{o}czky, et al. \cite{BLYZ}. Additional references regarding the $L_{p}$ Minkowski problem and Minkowski-type problems can be found in \cite{BLYZ, WC, CW, GG, GL, GM, H1, HL1, HLYZ2, HLYZ1, HMS, JI, KL, LWA, LUT, LO, LYZ5, ST1, ST2}.

The solutions to the Minkowski problem and the $L_{p}$ Minkowski problem connect with some important flows (see, e.g., \cite{AN1, AN2, KSC, GH, HUS}), and have important applications to Sobolev-type inequalities, see, e.g., Zhang \cite{Z}, Lutwak, et al. \cite{LYZ4}, Ciachi, et al. \cite{CLYZ}, Haberl and Schuster \cite{HS1, HS2, HSX}, and Wang \cite{TUO}.

Most previous work on the $L_{p}$ Minkowski problem was limited to the case where $p>1$. The reason that uniqueness of solutions to the $L_{p}$ Minkowski problem for $p>1$ can be shown is the availability of mixed volume inequalities established by Lutwak \cite{LUT}. One reason that the $L_{p}$ Minkowski problem becomes challenging when $p<1$ is because little is known about the mixed volume inequalities when $p<1$ (see, e.g., \cite{BLYZ2}). In $\mathbb{R}^{n}$, necessary and sufficient conditions for the existence of the solution of the even $L_{p}$ Minkowski problem for the case of $0<p<1$ was given by Haberl, et al. \cite{HLYZ1}. Necessary and sufficient conditions for the existence of solutions to the even $L_{0}$ Minkowski problem (also called the logarithmic Minkowski problem) was recently established by B\"{o}r\"{o}czky, et al. \cite{BLYZ}. Without the assumption that the measure is even, existence of solution of the PDE (\ref{Equation 1.1}) for the case where $p>-n$ were given by Chou and Wang \cite{CW}. In \cite{Z2, Z3}, the author established necessary and sufficient conditions for the existence of the solution of the $L_{p}$ Minkowski problem for the case where $p=0$ and $p=-n$, and $\mu$ is a discrete measure whose support-vectors (i.e., vectors in the support of the measure) are in general position.

One reason that the Minkowski and the $L_{p}$ Minkowski problem for polytopes are important is because the Minkowski problem and the $L_{p}$ Minkowski problem (for $p>1$) for measures can be solved by an approximation argument by first solving the polytopal case (see, e.g., Hug, et al. \cite{HLYZ2} and Schneider \cite{SCH}, pp. 392-393).

A \emph{polytope} in $\mathbb{R}^{n}$ is the convex hull of a finite set of points in $\mathbb{R}^{n}$ provided that it has positive $n$-dimensional volume.
The convex hull of a subset of these points is called a \emph{facet} of the polytope if it lies entirely on the boundary of the polytope and has
positive ($n-1$)-dimensional volume. If a polytope $P$ contains the origin in its interior with $N$ facets whose outer unit normals are
$u_{1},...,u_{N}$, and such that if the facet with outer unit normal $u_{k}$ has area $a_{k}$ and distance from the origin $h_{k}$ for
all $k\in\{1,...,N\}$. Then,
$$
S_{p}(P,\cdot)=\sum_{k=1}^{N}h_{k}^{1-p}a_{k}\delta_{u_{k}}(\cdot).
$$
where $\delta_{u_{k}}$ denotes the delta measure that is concentrated at the point $u_{k}$.\\

It is the aim of this paper to establish:\\ \\
\textbf{Theorem.} \emph{If $p\in(0,1)$, and $\mu$ is a discrete measure on the unit sphere, then $\mu$ is the $L_{p}$ surface area measure of a polytope if and only if the support of $\mu$ is not concentrated on a closed hemisphere.}\\

This paper is organized as follows. In Section 2, we recall some basic facts about convex bodies. In Section 3,
we study an extremal problem related to the $L_{p}$ Minkowski problem. In Section 4, we prove the main theorem of this paper.

For the case where $p>1$ with $p\neq n$, necessary and sufficient conditions for the existence of solutions to the discrete $L_{p}$ Minkowski problem were established by Hug, et al. \cite{HLYZ2}. In Section 5, we give a new proof of this condition. The proof presented in this paper includes a new approach to the classical Minkowski problem.

\section{Preliminaries}

In this section, we collect some basic definitions and facts about convex bodies. For general references regarding convex bodies see, e.g., \cite{GAR1, PM, GRU, SCH, THO}.

The sets of this paper are subsets of the $n$-dimensional Euclidean space $\mathbb{R}^{n}$. For $x, y\in\mathbb{R}^{n}$, we write $x\cdot y$
for the standard inner product of $x$ and $y$, $|x|$ for the Euclidean norm of $x$ and $B^{n}$ for the unit ball of $\mathbb{R}^{n}$.

For $K_{1}, K_{2}\subset\mathbb{R}^{n}$ and $c_{1}, c_{2}\geq0$, the Minkowski combination, $c_{1}K_{1}+c_{2}K_{2}$, is defined by
$$
c_{1}K_{1}+c_{2}K_{2}=\{c_{1}x_{1}+c_{2}x_{2}: x_{1}\in K_{1}, x_{2}\in K_{2}\}.
$$
The \emph{support function}
$h_{K}: \mathbb{R}^{n}\rightarrow\mathbb{R}$ of a compact convex set
$K$ is defined, for $x\in\mathbb{R}^{n}$, by
$$
h(K,x)=\max\{x\cdot y: y\in K\}.
$$
Obviously, for $c\geq0$ and $x\in\mathbb{R}^{n}$,
$$
h(cK,x)=h(K,cx)=ch(K,x).
$$

The \emph{Hausdorff distance} between two compact sets $K, L$ in
$\mathbb{R}^{n}$ is defined by
$$
\delta(K,L)=\inf\{t\geq0: K\subset L+tB^{n}, L\subset K+tB^{n}\}.
$$
It is easily shown that the Hausdorff distance between two convex bodies, $K$ and $L$, is
$$
\delta(K,L)=\max_{u\in S^{n-1}}|h(K,u)-h(L,u)|.
$$

For a convex body $K$ in $\mathbb{R}^{n}$, and $u\in S^{n-1}$,
the \emph{support hyperplane} $H(K,u)$ in direction $u$ is defined by
$$
H(K,u)=\{x\in\mathbb{R}^{n}:x\cdot u=h(K,u)\},
$$
the \emph{half-space} $H^{-}(K,u)$ in direction $u$ is defined by
$$
H^{-}(K,u)=\{x\in\mathbb{R}^{n}: x\cdot u\leq h(K,u)\},
$$
and the \emph{support set} $F(K,u)$ in direction $u$ is defined by
$$
F(K,u)=K\cap H(K,u).
$$

For a compact $K\in\mathbb{R}^{n}$, the diameter of $K$ is defined by
$$
d(K)=\max\{|x-y|: x,y\in K\}.
$$

Let $\mathcal{P}$ be the set of polytopes in
$\mathbb{R}^{n}$. If the unit vectors $u_{1},...,u_{N}$ ($N\geq n+1$) are not concentrated on
a closed hemisphere, let $\mathcal{P}(u_{1},...,u_{N})$ be the
subset of $\mathcal{P}$ such that a polytope
$P\in\mathcal{P}(u_{1},...,u_{N})$ if
$$
P=\bigcap_{k=1}^{N}H^{-}(P,u_{k}).
$$
Obviously, if $P\in\mathcal{P}(u_{1},...,u_{N})$, then $P$ has at
most $N$ facets, and the outer unit normals of $P$ are a subset of
$\{u_{1},...,u_{N}\}$. Let $\mathcal{P}_{N}(u_{1},...,u_{N})$
be the subset of $\mathcal{P}(u_{1},...,u_{N})$ such that a
polytope
$P\in\mathcal{P}_{N}(u_{1},...,u_{N})$ if,
$P\in\mathcal{P}(u_{1},...,u_{N})$ and $P$ has exactly $N$
facets.

\section{An extreme problem related to the $L_{p}$ Minkowski problem}

Suppose $p\in(0,1)$, $\alpha_{1},...,\alpha_{N}\in\mathbb{R}^{+}$, the unit vectors $u_{1},...,u_{N}$ ($N\geq n+1$) are not concentrated on a closed hemisphere, and $P\in\mathcal{P}(u_{1},...,u_{N})$. Define the function, $\Phi_{P}: P\rightarrow\mathbb{R}$, by
$$
\Phi_{P}(\xi)=\sum_{k=1}^{N}\alpha_{k}(h(P,u_{k})-\xi\cdot u_{k})^{p}.
$$

In this section, we study the extremal problem
\begin{equation}\tag{3.0}\label{Equation 3.0}
\inf\{\sup_{\xi\in Q}\Phi_{Q}(\xi): Q\in\mathcal{P}(u_{1},...,u_{N})\textmd{ and }V(Q)=1\}.
\end{equation}
We will prove that $\Phi_{P}(\xi)$ is strictly concave on $P$ and that there exists a unique $\xi_{p}(P)\in\Int(P)$ such that
$$
\Phi_{P}(\xi_{p}(P))=\sup_{\xi\in P}\Phi_{P}(\xi).
$$
Moreover, we will prove that there exists a polytope with $u_{1},...,u_{N}$ as its outer unit normals, and the polytope solving problem (\ref{Equation 3.0}).

We first prove the concavity of $\Phi_{P}(\xi)$.
\begin{lemma}\label{Lemma 3.1}
 If $0<p<1$, $\alpha_{1},...,\alpha_{N}\in\mathbb{R}^{+}$, the unit vectors $u_{1},...,u_{N}$ ($N>n+1$) are not concentrated on a closed hemisphere and $P\in\mathcal{P}(u_{1},...,u_{N})$, then $\Phi_{P}(\xi)$ is strictly concave on $P$.
\end{lemma}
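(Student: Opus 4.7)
The plan is to exploit the fact that each summand is the composition of the strictly concave scalar function $t\mapsto t^{p}$ (for $0<p<1$) with the affine function $\xi\mapsto h(P,u_{k})-\xi\cdot u_{k}$. First I would check that $\Phi_{P}$ is well-defined: since $\xi\in P\subset\bigcap_{k}H^{-}(P,u_{k})$, one has $\xi\cdot u_{k}\le h(P,u_{k})$, so each base $h(P,u_{k})-\xi\cdot u_{k}$ is nonnegative and $(h(P,u_{k})-\xi\cdot u_{k})^{p}$ makes sense. Because $t\mapsto t^{p}$ is concave on $[0,\infty)$ and the inner function is affine, each summand is concave on $P$; hence $\Phi_{P}$ itself is concave on $P$ as a positive linear combination of concave functions.

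The core step is upgrading concavity to strict concavity. Fix distinct $\xi_{0},\xi_{1}\in P$ and set $v=\xi_{1}-\xi_{0}\neq 0$. Along the segment $\xi_{t}=(1-t)\xi_{0}+t\xi_{1}$, the $k$-th base becomes the affine function $t\mapsto h(P,u_{k})-\xi_{0}\cdot u_{k}-t\,v\cdot u_{k}$, which is non-constant in $t$ precisely when $v\cdot u_{k}\neq 0$. Since $t\mapsto t^{p}$ is strictly concave on $[0,\infty)$, the corresponding summand is strictly concave on $[\xi_{0},\xi_{1}]$ for every such $k$, while the remaining summands (those with $v\cdot u_{k}=0$) are merely concave.

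It therefore suffices to produce at least one index $k$ with $v\cdot u_{k}\neq 0$. If, to the contrary, $v\cdot u_{k}=0$ for all $k\in\{1,\dots,N\}$, then every $u_{k}$ lies in the hyperplane $v^{\perp}$, which is contained in the closed hemisphere $\{u\in S^{n-1}:u\cdot v\geq 0\}$. This contradicts the hypothesis that $u_{1},\dots,u_{N}$ are not concentrated on a closed hemisphere. Consequently some summand of $\Phi_{P}$ is strictly concave along $[\xi_{0},\xi_{1}]$, and the whole sum inherits strict concavity on that segment. As $\xi_{0},\xi_{1}$ were arbitrary distinct points of $P$, the function $\Phi_{P}$ is strictly concave on $P$.

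I do not anticipate a genuine obstacle here: the only subtlety is the closed-hemisphere hypothesis, which is exactly the right condition to rule out the degenerate case in which all affine factors are constant along some line segment. Everything else is a direct application of the composition rule for concavity together with the strict concavity of $t\mapsto t^{p}$ for $0<p<1$.
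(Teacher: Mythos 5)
Your proposal is correct and takes essentially the same route as the paper: both use strict concavity of $t\mapsto t^{p}$ composed with the affine maps $\xi\mapsto h(P,u_{k})-\xi\cdot u_{k}$, and both convert the hypothesis that the $u_{k}$ are not concentrated on a closed hemisphere into the statement that no nonzero $v$ is orthogonal to all the $u_{k}$ (the paper phrases this as $\mathrm{Span}\{u_{1},\dots,u_{N}\}=\mathbb{R}^{n}$, you phrase it as a direct contradiction with the closed-hemisphere hypothesis). Your additional remark that each base $h(P,u_{k})-\xi\cdot u_{k}$ is nonnegative for $\xi\in P$ is a useful well-definedness check that the paper leaves implicit.
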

\begin{proof}
For $0<p<1$, $t^{p}$ is strictly concave on $[0,+\infty)$. Thus, for $0<\lambda<1$
and $\xi_{1}, \xi_{2}\in P$,
\begin{equation*}
\begin{split}
\lambda\Phi_{P}(\xi_{1})+(1-\lambda)\Phi_{P}(\xi_{2})&=\lambda\sum_{k=1}^{N}\alpha_{k}(h(P,u_{k})-\xi_{1}\cdot
u_{k})^{p}+(1-\lambda)\sum_{k=1}^{N}\alpha_{k}(h(P,u_{k})-\xi_{2}\cdot
u_{k})^{p}\\
&=\sum_{k=1}^{N}\alpha_{k}\left[\lambda(h(P,u_{k})-\xi_{1}\cdot
u_{k})^{p}+(1-\lambda)(h(P,u_{k})-\xi_{2}\cdot u_{k})^{p}\right]\\
&\leq\sum_{k=1}^{N}\alpha_{k}\left[h(P,u_{k})-(\lambda\xi_{1}+(1-\lambda)\xi_{2})\cdot
u_{k}\right]^{p}\\
&=\Phi_{P}(\lambda\xi_{1}+(1-\lambda)\xi_{2}),
\end{split}
\end{equation*}
with equality if and only if $\xi_{1}\cdot u_{k}=\xi_{2}\cdot u_{k}$
for all $k=1,...,N$. Since $u_{1},...,u_{N}$ are not concentrated on a closed hemisphere, $\mathbb{R}^{n}=\Span\{u_{1},...,u_{N}\}$. Thus, $\xi_{1}=\xi_{2}$. Therefore, $\Phi_{P}$ is strictly concave
on $P$.
\end{proof}

The following lemma is needed.
\begin{lemma}\label{Lemma 3.2}
If $0<p<1$, $\alpha_{1},...,\alpha_{N}\in\mathbb{R}^{+}$, the unit vectors $u_{1},...,u_{N}$ ($N>n+1$) are not concentrated on a closed hemisphere and $P\in\mathcal{P}(u_{1},...,u_{N})$, then there exists a unique $\xi_{p}(P)\in\Int(P)$ such that
$$
\Phi_{P}(\xi_{p}(P))=\max_{\xi\in P}\Phi_{P}(\xi),
$$
where $\Phi_{P}(\xi)=\sum_{k=1}^{N}\alpha_{k}(h(P,u_{k})-\xi\cdot u_{k})^{p}$.
\end{lemma}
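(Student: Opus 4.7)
The plan is to combine the compactness of $P$, the continuity of $\Phi_P$, the strict concavity already provided by Lemma 3.1, and a directional blow-up argument at the boundary that exploits $0<p<1$.

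First I would observe that $P$ is compact and $\Phi_P$ is continuous on $P$: each map $\xi\mapsto h(P,u_{k})-\xi\cdot u_{k}$ is linear and nonnegative on $P$, and $t\mapsto t^{p}$ is continuous on $[0,\infty)$. Hence $\Phi_P$ attains its maximum on $P$. Since Lemma 3.1 asserts that $\Phi_P$ is strictly concave on $P$, any such maximizer is automatically unique. The content of the lemma therefore reduces to excluding the possibility that the maximum is attained on $\partial P$.

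Suppose for contradiction that $\Phi_P(\xi_0)=\max_{\xi\in P}\Phi_P(\xi)$ for some $\xi_0\in\partial P$. Since $P=\bigcap_{k=1}^{N}H^{-}(P,u_{k})$, the index set $I=\{k:\xi_0\cdot u_{k}=h(P,u_{k})\}$ is nonempty. Pick any $\xi^{*}\in\Int(P)$ and set $\xi_t=(1-t)\xi_0+t\xi^{*}$ for $t\in[0,1]$, so $\xi_t\in P$ throughout. For $k\in I$ one computes $h(P,u_{k})-\xi_t\cdot u_{k}=t\,\beta_k$ with $\beta_k:=(\xi_0-\xi^{*})\cdot u_{k}$, and $\beta_k>0$ because $\xi^{*}\in\Int(P)$ forces $\xi^{*}\cdot u_{k}<h(P,u_{k})=\xi_0\cdot u_{k}$. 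For $k\notin I$ the quantity $h(P,u_{k})-\xi_t\cdot u_{k}$ remains bounded away from $0$ near $t=0$, so $t\mapsto\alpha_{k}(h(P,u_{k})-\xi_t\cdot u_{k})^{p}$ is $C^{1}$ on a neighbourhood of $0$, and the total change in these smooth terms is bounded below by $-Ct$ for some constant $C$ depending only on $P,\xi^{*},p$ and the $\alpha_{k}$.

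Combining the two contributions yields
$$
\Phi_P(\xi_t)-\Phi_P(\xi_0)\;\geq\;t^{p}\sum_{k\in I}\alpha_{k}\beta_k^{p}\;-\;Ct.
$$
Because $0<p<1$ we have $t^{p}/t\to+\infty$ as $t\to 0^{+}$, so the right-hand side is strictly positive for all sufficiently small $t>0$, contradicting the maximality of $\xi_0$. Hence the maximizer lies in $\Int(P)$, and uniqueness follows from Lemma 3.1. The main obstacle is precisely this boundary-exclusion step, and its resolution rests entirely on $p<1$: the infinite right-derivative of $t^{p}$ at $0$ means that activating any vanishing summand produces a first-order gain of order $t^{p}$ that overwhelms the order-$t$ loss from the smooth summands. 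The non-hemisphere hypothesis on $u_{1},\ldots,u_{N}$ plays no direct role here; it enters only indirectly, through Lemma 3.1, to guarantee the strict concavity used for uniqueness.
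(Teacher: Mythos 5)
Your proof is correct and takes essentially the same approach as the paper: both move from the hypothetical boundary maximizer toward an interior point and observe that the active constraints contribute a gain of order $t^{p}$ while the inactive ones contribute a loss of order $t$, so $0<p<1$ forces a strict increase near the boundary. The only cosmetic difference is that you parametrize by the convex-combination parameter $t$ and invoke a $C^{1}$ Lipschitz bound, whereas the paper parametrizes by a step $\delta$ along the unit vector toward the interior point and writes the Lipschitz constant $p(c_{0}/2)^{p-1}$ explicitly.
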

\begin{proof}
From Lemma \ref{Lemma 3.1}, for $0<p<1$, $\Phi_{P}(\xi)$ is strictly concave on $P$. From this and the fact that $P$ is a compact convex set, we have, there
exists a unique $\xi_{p}(P)\in P$ such that
$$
\Phi_{P}(\xi_{p}(P))=\max_{\xi\in P}\Phi_{P}(\xi).
$$
We next prove that $\xi_{p}(P)\in\Int(P)$. Otherwise, suppose $\xi_{p}(P)\in\partial P$ with
$$
h(P,u_{k})-\xi_{p}(P)\cdot u_{k}=0
$$
for $k\in\{i_{1},...,i_{m}\}$ and
$$
h(P,u_{k})-\xi_{p}(P)\cdot u_{k}>0
$$
for $k\in\{1,...,N\}\backslash\{i_{1},...,i_{m}\}$, where $1\leq
i_{1}<...<i_{m}\leq N$ and $1\leq m\leq N-1$. Choose $x_{0}\in\Int(P)$, let
$$
u_{0}=\frac{x_{0}-\xi_{p}(P)}{|x_{0}-\xi_{p}(P)|},
$$
and let
\begin{equation}\tag{3.1}\label{Equation 3.1}
\begin{split}
\left[h(P,u_{k})-(\xi_{p}(P)+\delta u_{0})\cdot u_{k}\right]-\left[h(P,u_{k})-\xi_{p}(P)\cdot u_{k}\right]&=(-u_{0}\cdot u_{k})\delta\\
&=c_{k}\delta,
\end{split}
\end{equation}
where $c_{k}=-u_{0}\cdot u_{k}$. Since $h(P,u_{k})-\xi_{p}(P)\cdot u_{k}=0$ for $k\in\{i_{1},...,i_{m}\}$ and $x_{0}$ is an interior point of $P$, $c_{k}=-u_{0}\cdot u_{k}>0$ for $k\in\{i_{1},...,i_{m}\}$. Let
\begin{equation}\tag{3.2}\label{Equation 3.2}
c_{0}=\min\big\{h(P,u_{k})-\xi_{p}(P)\cdot u_{k}: k\in\{1,...,N\}\backslash\{i_{1},...,i_{m}\}\big\}>0,
\end{equation}
and choose $\delta>0$ small enough so that $\xi_{p}(P)+\delta u_{0}\in\Int(P)$ and
\begin{equation}\tag{3.3}\label{Equation 3.3}
\min\big\{h(P,u_{k})-\big(\xi_{p}(P)+\delta u_{0}\big)\cdot u_{k}: k\in\{1,...,N\}\backslash\{i_{1},...,i_{m}\}\big\}>\frac{c_{0}}{2}.
\end{equation}

Obviously, for $0<p<1$ and $x_{0}, x_{0}+\Delta x\in(\frac{c_{0}}{2},+\infty)$,
$$
|(x_{0}+\Delta x)^{p}-x_{0}^{p}|<p(\frac{c_{0}}{2})^{p-1}|\Delta x|.
$$
From this, the fact that $h(P,u_{k})=\xi_{p}(P)\cdot u_{k}$ for $k\in\{i_{1},...,i_{m}\}$, the fact $c_{k}>0$ for $k\in\{i_{1},...,i_{m}\},$ and Equations (\ref{Equation 3.1}), (\ref{Equation 3.2}) and (\ref{Equation 3.3}), we have
\begin{equation*}
\begin{split}
\Phi_{P}(\xi_{p}(P)+\delta u_{0})-\Phi_{P}(\xi_{p}(P))&=\sum_{k=1}^{N}\alpha_{k}\bigg[\big(h(P,u_{k})-(\xi_{p}(P)+\delta u_{0})\cdot
u_{k}\big)^{p}-\big(h(P,u_{k})-\xi_{p}(P)\cdot u_{k}\big)^{p}\bigg]\\
&\geq\sum_{k\in\{i_{1},...,i_{m}\}}\alpha_{k}(c_{k}\delta)^{p}-\sum_{k\in\{1,...,N\}\backslash\{i_{1},...,i_{m}\}}\alpha_{k}\bigg|\big(h(P,u_{k})-\xi_{p}(P)\cdot
u_{k}\\
&\qquad+c_{k}\delta\big)^{p}-\big(h(P,u_{k})-\xi_{p}(P)\cdot u_{k}\big)^{p}\bigg|\\
&\geq\bigg(\sum_{k\in\{i_{1},...,i_{m}\}}\alpha_{k}c_{k}^{p}\bigg)\delta^{p}-\sum_{k\in\{1,...,N\}\backslash\{i_{1},...,i_{m}\}}\alpha_{k}p(\frac{c_{0}}{2})^{p-1}|c_{k}\delta|\\
&=\bigg(\sum_{k\in\{i_{1},...,i_{m}\}}\alpha_{k}c_{k}^{p}-\sum_{k\in\{1,...,N\}\backslash\{i_{1},...,i_{m}\}}\alpha_{k}p(\frac{c_{0}}{2})^{p-1}|c_{k}|\delta^{1-p}\bigg)\delta^{p}.
\end{split}
\end{equation*}
Thus, there exists a small enough $\delta_{0}>0$ such that $\xi_{p}(P)+\delta_{0}u_{0}\in\Int(P)$ and
$$
\Phi_{P}(\xi_{p}(P)+\delta_{0}u_{0})>\Phi_{P}(\xi_{p}(P)).
$$
This contradicts the definition of $\xi_{p}(P)$. Therefore, $\xi_{p}(P)\in\Int(P)$.
\end{proof}

By definition, for $\lambda>0$, $p\in(0,1)$ and $P\in\mathcal{P}(u_{1},...,u_{N})$,
\begin{equation}\tag{3.4}\label{Equation 3.4}
\xi_{p}(\lambda P)=\lambda\xi_{p}(P).
\end{equation}

Obviously, if $P_{i}\in\mathcal{P}(u_{1},...,u_{N})$ and $P_{i}$ converges to a polytope $P$, then $P\in\mathcal{P}(u_{1},...,u_{N})$.
\begin{lemma}\label{Lemma 3.3}
If $p\in(0,1)$, $\alpha_{1},...,\alpha_{N}$ are positive, the unit vectors $u_{1},...,u_{N}$ ($N\geq n+1$) are not concentrated on a closed hemisphere, $P_{i}\in\mathcal{P}(u_{1},...,u_{N})$, and $P_{i}$ converges to a polytope $P$. Then,
$\lim_{i\rightarrow\infty}\xi_{p}(P_{i})=\xi_{p}(P)$ and
$$
\lim_{i\rightarrow\infty}\Phi_{P_{i}}(\xi_{p}(P_{i}))=\Phi_{P}(\xi_{p}(P)),
$$
where $\Phi_{P}(\xi)=\sum_{k=1}^{N}\alpha_{k}(h(P,u_{k})-\xi\cdot u_{k})^{p}$.
\end{lemma}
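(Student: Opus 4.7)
The plan is to use a subsequence argument built on the uniqueness of the maximizer $\xi_p(P)$ established in Lemma 3.2. If every convergent subsequence of $\{\xi_p(P_i)\}$ has the same limit $\xi_p(P)$, then the entire sequence converges to $\xi_p(P)$; the convergence $\Phi_{P_i}(\xi_p(P_i)) \to \Phi_P(\xi_p(P))$ will then follow from the joint continuity of $\Phi_Q(\xi)$ in its data.

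First I would extract a candidate limit. Since $P_i \to P$ in the Hausdorff metric and $P$ is compact, the polytopes $P_i$ are contained in a common bounded set for all large $i$, hence $\{\xi_p(P_i)\}$ is bounded. Pass to any convergent subsequence, still denoted $\{\xi_p(P_i)\}$, with limit $\xi^{*}$. Because $P_i \to P$ in Hausdorff distance and $P$ is closed, $\xi^{*} \in P$.

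The main step is to identify $\xi^{*}=\xi_p(P)$ by a variational comparison, and this is where the only real subtlety arises: one cannot simply test the inequality $\Phi_{P_i}(\xi_p(P_i)) \geq \Phi_{P_i}(\xi)$ against an arbitrary $\xi \in P$, because such a $\xi$ need not lie in $P_i$. The fix is to test against interior points. For $\xi \in \Int(P)$ we have $\xi\cdot u_k < h(P,u_k)$ for every $k$, and since Hausdorff convergence gives $h(P_i,u_k)\to h(P,u_k)$, we obtain $\xi\cdot u_k < h(P_i,u_k)$ for all $k$ and all large $i$; as $P_i=\bigcap_{k=1}^{N}H^{-}(P_i,u_k)$, this forces $\xi\in P_i$. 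Hence the maximality of $\xi_p(P_i)$ on $P_i$ gives
$$
\Phi_{P_i}(\xi_p(P_i)) \geq \Phi_{P_i}(\xi).
$$
Using $h(P_i,u_k)\to h(P,u_k)$, $\xi_p(P_i)\to\xi^{*}$ and continuity of $t\mapsto t^p$ on $[0,\infty)$, passing to the limit yields $\Phi_P(\xi^{*}) \geq \Phi_P(\xi)$ for every $\xi\in\Int(P)$. By continuity of $\Phi_P$ on $P$, the inequality extends to all $\xi\in P$, so $\xi^{*}$ maximizes $\Phi_P$ over $P$. The uniqueness part of Lemma 3.2 then forces $\xi^{*}=\xi_p(P)$.

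Since every convergent subsequence of the bounded sequence $\{\xi_p(P_i)\}$ has limit $\xi_p(P)$, the full sequence converges to $\xi_p(P)$. Finally, one more application of $h(P_i,u_k)\to h(P,u_k)$, $\xi_p(P_i)\to\xi_p(P)$, and continuity of $t\mapsto t^p$ yields
$$
\Phi_{P_i}(\xi_p(P_i)) = \sum_{k=1}^{N}\alpha_k\bigl(h(P_i,u_k)-\xi_p(P_i)\cdot u_k\bigr)^{p} \longrightarrow \Phi_P(\xi_p(P)),
$$
completing the proof.
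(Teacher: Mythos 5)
Your proof is correct and follows essentially the same subsequence argument as the paper: pass to a convergent subsequence, identify its limit with $\xi_p(P)$ via the variational characterization and the uniqueness from Lemma 3.2, and conclude. The only cosmetic difference is that the paper tests the maximality inequality directly against $\xi_p(P)$ (which lies in $\Int(P)$ by Lemma 3.2, hence in $P_{i_j}$ for large $j$), while you test against a general $\xi\in\Int(P)$ and then extend to all of $P$ by continuity; this makes the membership issue a bit more explicit but is the same underlying idea.
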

\begin{proof}
Since $P_{i}\rightarrow P$ and $\xi_{p}(P_{i})\in\Int(P_{i})$, $\xi_{p}(P_{i})$ is bounded. Suppose
$\xi_{p}(P_{i})$ does not converge to $\xi_{p}(P)$, then there exists a subsequence $P_{i_{j}}$ of $P_{i}$ such that $P_{i_{j}}$ converges to $P$,
$\xi_{p}(P_{i_{j}})\rightarrow\xi_{0}$ but $\xi_{0}\neq\xi_{p}(P)$. Obviously, $\xi_{0}\in P$ and
\begin{equation*}
\begin{split}
\lim_{j\rightarrow\infty}\Phi_{P_{i_{j}}}(\xi_{p}(P_{i_{j}}))&=\Phi_{P}(\xi_{0})\\
&<\Phi_{P}(\xi_{p}(P))\\
&=\lim_{j\rightarrow\infty}\Phi_{P_{i_{j}}}(\xi_{p}(P)).
\end{split}
\end{equation*}
This contradicts the fact that
$$
\Phi_{P_{i_{j}}}(\xi_{p}(P_{i_{j}}))\geq\Phi_{P_{i_{j}}}(\xi_{p}(P)).
$$
Therefore, $\lim_{i\rightarrow\infty}\xi_{p}(P_{i})=\xi_{p}(P)$ and thus,
$$
\lim_{i\rightarrow\infty}\Phi_{P_{i}}(\xi_{p}(P_{i}))=\Phi_{P}(\xi_{p}(P)).
$$
\end{proof}

The following lemma is useful in the proving of the compactness of problem (\ref{Equation 3.0}).
\begin{lemma}\label{Lemma 3.4}
 Suppose $p>0$, $\alpha_{1},...,\alpha_{N}$ are positive, and the unit vectors $u_{1},...,u_{N}$ ($N\geq n+1$) are not concentrated on a hemisphere. If $P_{k}\in\mathcal{P}(u_{1},...,u_{N})$, $o\in P_{k}$, and $R(P_{k})$ is not bounded, then
$$
\sum_{i=1}^{N}\alpha_{i}h(P_{k},u_{i})^{p}
$$
is not bounded.
\end{lemma}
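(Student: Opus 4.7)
The plan is to pick, for each $k$, a point in $P_{k}$ that realizes the maximal distance to the origin, to extract a convergent direction by compactness of $S^{n-1}$, and then to use the non-concentration hypothesis to exhibit a single index $i_{0}$ along which the support function blows up. Since $p>0$ and $\alpha_{i_{0}}>0$, just one term of the sum will then tend to infinity.

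More precisely, suppose $R(P_{k})$ is unbounded; after passing to a subsequence assume $R(P_{k})\to\infty$. For each $k$, choose $v_{k}\in P_{k}$ with $|v_{k}|=R(P_{k})$, and write $v_{k}=R(P_{k})\,w_{k}$ with $w_{k}\in S^{n-1}$. By compactness of the sphere, passing to a further subsequence we may assume $w_{k}\to w_{0}\in S^{n-1}$. Because $u_{1},\ldots,u_{N}$ are not concentrated on a closed hemisphere, there is no unit vector $v$ with $u_{i}\cdot v\geq 0$ for every $i$; applying this to $-w_{0}$ yields an index $i_{0}$ with $u_{i_{0}}\cdot w_{0}>0$. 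Set $\delta=\tfrac{1}{2}\,u_{i_{0}}\cdot w_{0}>0$; for all sufficiently large $k$, continuity of the inner product gives $u_{i_{0}}\cdot w_{k}\geq\delta$.

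Since $v_{k}\in P_{k}$, the definition of the support function yields
$$
h(P_{k},u_{i_{0}})\;\geq\; v_{k}\cdot u_{i_{0}}\;=\;R(P_{k})\,(w_{k}\cdot u_{i_{0}})\;\geq\;\delta\,R(P_{k}),
$$
and the right-hand side tends to $+\infty$. As $p>0$ and $\alpha_{i_{0}}>0$, the single term $\alpha_{i_{0}}h(P_{k},u_{i_{0}})^{p}$ tends to $+\infty$, so $\sum_{i=1}^{N}\alpha_{i}h(P_{k},u_{i})^{p}$ is unbounded, contradicting the supposition and proving the lemma.

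The only point requiring care is the use of non-concentration on a closed hemisphere: it is exactly this hypothesis that produces a strictly positive lower bound for $u_{i_{0}}\cdot w_{0}$. Without it, the far-away vertex $v_{k}$ could escape in a direction with $w_{0}\cdot u_{i}\leq 0$ for every $i$, in which case no support function value $h(P_{k},u_{i})$ would need to grow. Everything else is routine: subsequential compactness on $S^{n-1}$, and the elementary inequality $h(P,u)\geq x\cdot u$ for any $x\in P$.
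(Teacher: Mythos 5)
Your proof is correct. The logical structure is a direct argument (assume $R(P_{k})$ unbounded, pass to a subsequence along which $R(P_{k})\to\infty$, show the sum blows up along that subsequence, hence the full sequence is unbounded); the closing phrase ``contradicting the supposition'' is a small slip, since nothing is being contradicted, but the mathematics is sound.

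The route differs from the paper's in an instructive way. The paper avoids subsequences by introducing the auxiliary function
$f(u)=\sum_{i=1}^{N}\alpha_{i}|u\cdot u_{i}|^{p}$ on $S^{n-1}$, arguing that it is continuous and positive (hence bounded below by some $a_{0}>0$), and then bounding the whole sum from below by $a_{0}R(P_{k})^{p}$ in one stroke. Your argument instead extracts a convergent subsequence of the directions $w_{k}$, applies the closed-hemisphere hypothesis once to the limit direction $w_{0}$ to produce a single index $i_{0}$ with $u_{i_{0}}\cdot w_{0}>0$, and lets that one term $\alpha_{i_{0}}h(P_{k},u_{i_{0}})^{p}$ do all the work. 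Your version is a bit more elementary (no auxiliary function, no uniform minimum) at the cost of a second subsequence extraction, and it also sidesteps a small inaccuracy in the paper's displayed inequality: the paper writes $h(P_{k},u_{i})\geq|R(P_{k})u_{k}\cdot u_{i}|$, which for indices $i$ with $u_{k}\cdot u_{i}<0$ is not justified (the point $-R(P_{k})u_{k}$ need not lie in $P_{k}$); only $h(P_{k},u_{i})\geq\max(0,R(P_{k})u_{k}\cdot u_{i})$ follows from $o,\,R(P_{k})u_{k}\in P_{k}$. The paper's argument is easily repaired by replacing $|u\cdot u_{i}|^{p}$ with $\max(0,u\cdot u_{i})^{p}$ in the definition of $f$ (the closed-hemisphere hypothesis still yields positivity), but your single-index approach never encounters the issue, since you only ever use the one term where the inner product is positive in the limit, together with $h(P_{k},u_{i})\geq 0$ for the remaining terms.
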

\begin{proof}
Without loss of generality, we can assume
$$
\lim_{k\rightarrow\infty}R(P_{k})=\infty.
$$

Let
$$
f(u)=\sum_{i=1}^{N}\alpha_{i}|u\cdot u_{i}|^{p},
$$
where $u\in S^{n-1}$.

Since $u_{1},...,u_{N}$ are not contained in a closed hemisphere, $\mathbb{R}^{n}=\Span\{u_{1},...,u_{N}\}$. Thus, $f(u)>0$ for all $u\in S^{n-1}$. On the other hand, $f(u)$ is continuous on $S^{n-1}$. Thus, there exists a constant $a_{0}>0$ such that
$$
\sum_{i=1}^{N}\alpha_{i}|u\cdot u_{i}|^{p}\geq a_{0}
$$
for all $u\in S^{n-1}$.

Choose $u_{k}\in S^{n-1}$ such that $R(P_{k})u_{k}\in P_{k}$. Since $o\in P_{k}$,
\begin{equation*}
\begin{split}
\sum_{i=1}^{N}\alpha_{i}h(P_{k},u_{i})^{p}&\geq\sum_{i=1}^{N}\alpha_{i}|R(P_{k})u_{k}\cdot u_{i}|^{p}\\
&=R(P_{k})^{p}\big(\sum_{i=1}^{N}\alpha_{i}|u_{k}\cdot u_{i}|^{p}\big)\\
&\geq a_{0}R(P_{k})^{p}\rightarrow+\infty.
\end{split}
\end{equation*}
\end{proof}

The following lemma will be needed.
\begin{lemma}\label{Lemma 3.5}
If $P$ is a polytope in $\mathbb{R}^{n}$ and $v_{0}\in S^{n-1}$ with $V_{n-1}(F(P,v_{0}))=0$, then there exists a $\delta_{0}>0$ such that for $0\leq\delta<\delta_{0}$
$$
V(P\cap\{x: x\cdot v_{0}\geq h(P,v_{0})-\delta\})=c_{n}\delta^{n}+...+c_{2}\delta^{2},
$$
where $c_{n},...,c_{2}$ are constants that depend on $P$ and $v_{0}$.
\end{lemma}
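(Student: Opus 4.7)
The plan is to show that for small $\delta > 0$ the cap $P_\delta := P \cap \{x : x \cdot v_0 \geq h_0 - \delta\}$ (where $h_0 := h(P, v_0)$) is a polytope whose vertices depend affine-linearly on $\delta$; hence $V(P_\delta)$ is a polynomial in $\delta$ of degree at most $n$, and the hypothesis $V_{n-1}(F(P, v_0)) = 0$ will force the coefficients of $\delta^0$ and $\delta^1$ to vanish.

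First I would analyze the combinatorial structure of $P_\delta$ and triangulate it. Write $F_0 := F(P, v_0)$; since $V_{n-1}(F_0) = 0$, $F_0$ is a face of $P$ of dimension at most $n-2$. Let $T$ denote the set of vertices of $P$ contained in $F_0$, and choose $\eta > 0$ so that $v \cdot v_0 \leq h_0 - \eta$ for every vertex $v$ of $P$ not in $T$. For $\delta_0 \in (0, \eta)$ chosen small enough, the face lattice of $P_\delta$ is constant on $(0, \delta_0)$, with vertices given by the elements of $T$ together with one intersection point $(1 - \delta/c_e) v_1 + (\delta/c_e) v_2$ for each edge $e = [v_1, v_2]$ of $P$ with $v_1 \in T$, $v_2 \notin T$, where $c_e := h_0 - v_2 \cdot v_0 > 0$. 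Fixing any triangulation of $P_\delta$ into $n$-simplices compatible with this common combinatorial type, each simplex has vertices $w_0(\delta), \ldots, w_n(\delta)$ that are affine-linear in $\delta$, and its volume $\frac{1}{n!} |\det(w_1(\delta) - w_0(\delta), \ldots, w_n(\delta) - w_0(\delta))|$ is therefore a polynomial of degree at most $n$ in $\delta$ (the sign inside the absolute value is constant on $(0, \delta_0)$ by nondegeneracy). Summing over simplices, $V(P_\delta) = \sum_{k=0}^{n} c_k \delta^k$ on $(0, \delta_0)$ for constants $c_0, \ldots, c_n$ depending only on $P$ and $v_0$.

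Finally I would eliminate $c_0$ and $c_1$ by limit arguments. Since $P_\delta \to F_0$ in Hausdorff distance as $\delta \to 0^+$ and $n$-dimensional Lebesgue volume is continuous on convex bodies, $c_0 = V(F_0) = 0$. By Cavalieri, $V'(\delta) = V_{n-1}(P \cap \{x: x \cdot v_0 = h_0 - \delta\})$; as $\delta \to 0^+$ the cross-section converges in Hausdorff distance to $F_0$, yielding $c_1 = V_{n-1}(F_0) = 0$. This produces the claimed expansion $V(P_\delta) = c_n \delta^n + \cdots + c_2 \delta^2$. The principal obstacle I anticipate is justifying that the combinatorial type of $P_\delta$ is constant on $(0, \delta_0)$, which is a standard but delicate piece of polytope combinatorics: it requires that for $\delta < \eta$ no vertex of $P$ outside $T$ enters the cap, and that for $\delta_0$ sufficiently small the face incidences of $P_\delta$ are fully determined by those of $P$ via the vertex correspondence above.
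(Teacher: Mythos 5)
Your proof is correct but takes a genuinely different route from the paper's. The paper simply cites Gardner--Gritzmann (\cite{GGR}, Proposition~3.1) for the fact that $g(\delta)=V_{n-1}(P\cap\{x:x\cdot v_0=h(P,v_0)-\delta\})$ is a piecewise polynomial of degree at most $n-1$, notes that $g(0)=0$ forces the constant term to vanish on the initial interval, and integrates to get the claimed form. You instead reconstruct this fact from scratch: you show that for $\delta$ in an initial interval the cap $P_\delta$ has constant combinatorial type with vertices depending affine-linearly on $\delta$, triangulate compatibly, and observe that each simplex volume is a degree-$\le n$ polynomial with constant determinant sign; you then kill $c_0$ and $c_1$ by continuity of volume and Cavalieri plus Hausdorff convergence of the sections to $F(P,v_0)$. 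Your argument is self-contained and more elementary in the sense of not invoking an external proposition, at the cost of the delicate combinatorial step (constancy of the face lattice of $P_\delta$ on $(0,\delta_0)$ and the existence of a $\delta$-uniform triangulation), which you correctly flag as the main thing requiring care; the paper's citation sidesteps exactly this work. One small remark on your last step: once you know $V(P_\delta)$ is a polynomial on $(0,\delta_0)$ and $V(P_\delta)=\int_0^\delta g$, you can get $c_1=0$ directly from $V(P_\delta)/\delta\to 0$ (squeeze using $\limsup_{\delta\to 0^+}g(\delta)\le V_{n-1}(F(P,v_0))=0$ by upper semicontinuity of $(n-1)$-volume), without needing to differentiate or to assert pointwise Hausdorff convergence of sections as a separate fact.
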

\begin{proof}
It is known (e.g., \cite{GGR}, Proposition 3.1) that
$$
g(\delta)=V_{n-1}(P\cap\{x: x\cdot v_{0}= h(P,v_{0})-\delta\})
$$
is a piecewise polynomial function of degree at most $n-1$. By conditions, $g(0)=0$. Thus, there exists a $\delta_{0}>0$ and $c_{n-1}',...,c_{1}'$ (depend on $P$ and $v_{0}$) such that when $0\leq\delta<\delta_{0}$
$$
g(\delta)=c_{n-1}'\delta^{n-1}+...+c_{1}'\delta.
$$
Therefore, when $0\leq\delta<\delta_{0}$,
\begin{equation*}
\begin{split}
V(P\cap\{x: x\cdot v_{0}\geq h(P,v_{0})-\delta\})&=\int_{0}^{\delta}g(t)dt\\
&=c_{n}\delta^{n}+...+c_{2}\delta^{2},
\end{split}
\end{equation*}
where $c_{n}=c_{n-1}'/n,...,c_{2}=c_{1}'/2$ are constants that depend on $P$ and $v_{0}$.
\end{proof}

We next solve problem (\ref{Equation 3.0}).
\begin{lemma}\label{Lemma 3.6}
If $0<p<1$, $\alpha_{1},...,\alpha_{N}$ are positive, and the unit vectors $u_{1},...,u_{N}$ ($N\geq n+1$) are not concentrated on a hemisphere,
then there exists a $P\in\mathcal{P}_{N}(u_{1},...,u_{N})$ such that $\xi_{p}(P)=o$, $V(P)=1$, and
$$
\Phi_{P}(o)=\inf\{\max_{\xi\in Q}\Phi_{Q}(\xi): Q\in\mathcal{P}(u_{1},...,u_{N})\textmd{ and }V(Q)=1\},
$$
where $\Phi_{Q}(\xi)=\sum_{k=1}^{N}\alpha_{k}(h(Q,u_{k})-\xi\cdot u_{k})^{p}$.
\end{lemma}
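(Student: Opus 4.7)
The plan is to solve the extremal problem (\ref{Equation 3.0}) by direct variational methods: build a minimizing sequence, exploit translation invariance to normalize, extract a convergent subsequence using Lemma \ref{Lemma 3.4}, and then use Lemma \ref{Lemma 3.5} to rule out degeneration of the limit.

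First I would reduce to the normalized class. Since $h(Q - t, u_k) = h(Q, u_k) - t \cdot u_k$, the function $\Phi_{Q - t}(\xi) = \Phi_Q(\xi + t)$, so $\xi_p(Q - t) = \xi_p(Q) - t$ and $\max\Phi_{Q-t} = \max\Phi_Q$. Thus the infimum in (\ref{Equation 3.0}) is unchanged if we restrict to $Q$ with $\xi_p(Q) = o$. For such $Q$, Lemma \ref{Lemma 3.2} gives $o \in \Int(Q)$, so $\max_{\xi \in Q}\Phi_Q(\xi) = \Phi_Q(o) = \sum_k \alpha_k h(Q, u_k)^p$. Take a minimizing sequence $\{P_i\}$ in this normalized class. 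Then $\sum_k \alpha_k h(P_i, u_k)^p$ stays bounded, so by Lemma \ref{Lemma 3.4} the circumradii $R(P_i)$ are bounded, and by Blaschke selection a subsequence converges (in Hausdorff metric) to some convex body $P^*$. Continuity of volume gives $V(P^*) = 1$; the class $\mathcal{P}(u_1,\ldots,u_N)$ is Hausdorff-closed (limits of the defining support numbers give $P^* = \bigcap_k H^-(P^*, u_k)$); Lemma \ref{Lemma 3.3} gives $\xi_p(P^*) = o$ and $\Phi_{P^*}(o) = \lim \Phi_{P_i}(\xi_p(P_i))$ equals the infimum.

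The main obstacle is showing $P^* \in \mathcal{P}_N(u_1, \ldots, u_N)$, i.e.\ \emph{every} $u_k$ actually carries a facet of positive area. I would argue by contradiction: suppose $V_{n-1}(F(P^*, u_{k_0})) = 0$ for some $k_0$. Consider the shaved polytope
\[
P_\delta = P^* \cap \{x : x \cdot u_{k_0} \leq h(P^*, u_{k_0}) - \delta\}.
\]
By Lemma \ref{Lemma 3.5}, $V(P_\delta) = 1 - \sum_{j=2}^{n} c_j \delta^j$, so the volume loss is $O(\delta^2)$. On the other hand, for every $\xi \in P_\delta$ one has $h(P^*, u_{k_0}) - \xi \cdot u_{k_0} \leq M := h(P^*, u_{k_0}) + h(P^*, -u_{k_0})$, so by the mean value theorem applied to $t \mapsto t^p$ (and using $p-1 < 0$) one gets
\[
(h(P^*, u_{k_0}) - \xi \cdot u_{k_0})^p - (h(P^*, u_{k_0}) - \delta - \xi \cdot u_{k_0})^p \geq p M^{p-1} \delta.
\]
Since $h(P_\delta, u_j) = h(P^*, u_j)$ for $j \neq k_0$ and $h(P_\delta, u_{k_0}) = h(P^*, u_{k_0}) - \delta$, this forces $\Phi_{P_\delta}(\xi) \leq \Phi_{P^*}(\xi) - \alpha_{k_0} p M^{p-1} \delta$ for all $\xi \in P_\delta \subseteq P^*$, hence $\max \Phi_{P_\delta} \leq \max \Phi_{P^*} - c \delta$ for a positive constant $c$.

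Finally I would rescale: set $\tilde P_\delta = \lambda_\delta P_\delta$ with $\lambda_\delta = V(P_\delta)^{-1/n} = 1 + O(\delta^2)$, so $V(\tilde P_\delta) = 1$ and $\tilde P_\delta \in \mathcal{P}(u_1,\ldots,u_N)$. The scaling identity $\Phi_{\lambda P}(\lambda \xi) = \lambda^p \Phi_P(\xi)$ yields
\[
\max_{\xi \in \tilde P_\delta}\Phi_{\tilde P_\delta}(\xi) = \lambda_\delta^p \max_{\xi \in P_\delta} \Phi_{P_\delta}(\xi) \leq (1 + O(\delta^2))\bigl(\max \Phi_{P^*} - c\delta\bigr) < \max \Phi_{P^*}
\]
for all sufficiently small $\delta > 0$, contradicting the optimality of $P^*$. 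Hence every $u_k$ is an outer normal of a genuine facet, so $P^* \in \mathcal{P}_N(u_1, \ldots, u_N)$, completing the proof. The hard part is precisely this last perturbation: the $O(\delta)$ gain from shaving must beat the $O(\delta^2)$ rescaling cost, which is exactly what Lemma \ref{Lemma 3.5} buys and why the hypothesis $V_{n-1}(F(P^*, u_{k_0})) = 0$ is used.
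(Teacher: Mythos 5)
Your proposal is correct and follows essentially the same strategy as the paper's proof: normalize by translation so $\xi_p(P_i)=o$, use Lemma~\ref{Lemma 3.4} plus Blaschke selection to extract a limit $P^*$, and then rule out a missing facet by shaving at $u_{k_0}$ and comparing the $O(\delta)$ decrease of $\Phi$ (from concavity of $t^p$) against the $O(\delta^2)$ cost of volume renormalization from Lemma~\ref{Lemma 3.5}. The one stylistic difference is in how that $O(\delta)$ gain is quantified: the paper evaluates $\Phi_{\lambda P_\delta}$ at the maximizer $\xi_p(\lambda P_\delta)$, which requires invoking the continuity $\xi_p(P_\delta)\to o$ from Lemma~\ref{Lemma 3.3} and then a L'H\^{o}pital comparison of the two terms in $B(\delta)$, whereas you get a \emph{uniform} lower bound $\Phi_{P^*}(\xi)-\Phi_{P_\delta}(\xi)\geq p\,\alpha_{k_0}M^{p-1}\delta$ over all $\xi\in P_\delta$ via the mean value theorem, so that the inequality on maxima follows without tracking where the maximum sits; this is a modest simplification of the perturbation step but the idea is the same.
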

\begin{proof}
Obviously, for $P, Q\in\mathcal{P}(u_{1},...,u_{N})$, if there exists a $x\in\mathbb{R}^{n}$ such that $P=Q+x$, then
$$
\Phi_{P}(\xi_{p}(P))=\Phi_{Q}(\xi_{p}(Q)).
$$
Thus, we can choose a sequence $P_{i}\in\mathcal{P}(u_{1},...,u_{N})$ with $\xi_{p}(P_{i})=o$ and $V(P_{i})=1$ such that $\Phi_{P_{i}}(o)$ converges
to
$$
\inf\{\max_{\xi\in Q}\Phi_{Q}(\xi): Q\in\mathcal{P}(u_{1},...,u_{N})\textmd{ and }V(Q)=1\}.
$$

Choose a fixed $P_{0}\in\mathcal{P}(u_{1},...,u_{N})$ with $V(P_{0})=1$, then
$$
\inf\{\max_{\xi\in Q}\Phi_{Q}(\xi): Q\in\mathcal{P}(u_{1},...,u_{N})\textmd{ and }V(Q)=1\}\leq\Phi_{P_{0}}(\xi_{p}(P_{0})).
$$

We claim that $P_{i}$ is bounded. Otherwise, from Lemma \ref{Lemma 3.4}, $\Phi_{P_{i}}(\xi_{p}(P_{i}))$ converges to $+\infty$. This contradicts the previous inequality. Therefore, $P_{i}$ is bounded.

From Lemma \ref{Lemma 3.3} and the Blaschke selection theorem, there exists a subsequence of $P_{i}$ that converges to a
polytope $P$ such that $P\in\mathcal{P}(u_{1},...,u_{N})$, $V(P)=1$, $\xi(P)=o$ and
\begin{equation}\tag{3.6}\label{Equation 3.6}
\Phi_{P}(o)=\inf\{\max_{\xi\in Q}\Phi_{Q}(\xi): Q\in\mathcal{P}(u_{1},...,u_{N})\textmd{ and }V(Q)=1\}.
\end{equation}

We next prove that $F(P,u_{i})$ are facets for all $i=1,...,N$. Otherwise, there exists a $i_{0}\in\{1,...,N\}$ such that
$$
F(P,u_{i_{0}})
$$
is not a facet of $P$.

Choose $\delta>0$ small enough so that the polytope
$$
P_{\delta}=P\cap\{x: x\cdot u_{i_{0}}\leq h(P,u_{i_{0}})-\delta\}\in\mathcal{P}(u_{1},...,u_{N}).
$$
and (by Lemma \ref{Lemma 3.5})
$$
V(P_{\delta})=1-(c_{n}\delta^{n}+...+c_{2}\delta^{2}),
$$
where $c_{n},...,c_{2}$ are constants that depend on $P$ and direction $u_{i_{0}}$.

From Lemma \ref{Lemma 3.3}, for any $\delta_{i}\rightarrow0$ it always true that $\xi_{p}(P_{\delta_{i}})\rightarrow o$. We have,
$$
\lim_{\delta\rightarrow0}\xi_{p}(P_{\delta})=o.
$$
Let $\delta$ be small enough so that $h(P,u_{k})>\xi_{p}(P_{\delta})\cdot u_{k}+\delta$ for all $k\in\{1,...,N\}$, and let
$$
\lambda=V(P_{\delta})^{-\frac{1}{n}}=(1-(c_{n}\delta^{n}+...+c_{2}\delta^{2}))^{-\frac{1}{n}}.
$$
From this and Equation (\ref{Equation 3.4}), we have
\begin{equation}\tag{3.7}\label{Equation 3.7}
\begin{split}
\Phi_{\lambda P_{\delta}}(\xi_{p}(\lambda P_{\delta}))&=\sum_{k=1}^{N}\alpha_{k}\big(h(\lambda P_{\delta},u_{k})-\xi_{p}(\lambda
P_{\delta})\cdot
u_{k}\big)^{p}\\
&=\lambda^{p}\sum_{k=1}^{N}\alpha_{k}\big(h(P_{\delta},u_{k})-\xi_{p}(P_{\delta})\cdot u_{k}\big)^{p}\\
&=\lambda^{p}\sum_{k=1}^{N}\alpha_{k}\big(h(P,u_{k})-\xi_{p}(P_{\delta})\cdot
u_{k}\big)^{p}-\alpha_{i_{0}}\lambda^{p}\big(h(P,u_{i_{0}})-\xi_{p}(P_{\delta})\cdot
u_{i_{0}}\big)^{p}\\
&\qquad+\alpha_{i_{0}}\lambda^{p}\big(h(P,u_{i_{0}})-\xi_{p}(P_{\delta})\cdot u_{i_{0}}-\delta\big)^{p}\\
&=\sum_{k=1}^{N}\alpha_{k}\big(h(P,u_{k})-\xi_{p}(P_{\delta})\cdot
u_{k}\big)^{p}+(\lambda^{p}-1)\sum_{k=1}^{N}\alpha_{k}\big(h(P,u_{k})-\xi_{p}(P_{\delta})\cdot u_{k}\big)^{p}\\
&\qquad+\alpha_{i_{0}}\lambda^{p}\bigg[\big(h(P,u_{i_{0}})-\xi_{p}(P_{\delta})\cdot
u_{i_{0}}-\delta\big)^{p}-\big(h(P,u_{i_{0}})-\xi_{p}(P_{\delta})\cdot u_{i_{0}}\big)^{p}\bigg]\\
&=\Phi_{P}(\xi_{p}(P_{\delta}))+B(\delta),
\end{split}
\end{equation}
where
\begin{equation*}
\begin{split}
B(\delta)&=(\lambda^{p}-1)\left(\sum_{k=1}^{N}\alpha_{k}\big(h(P,u_{k})-\xi_{p}(P_{\delta})\cdot u_{k}\big)^{p}\right)\\
&\qquad+\alpha_{i_{0}}\lambda^{p}\bigg[\big(h(P,u_{i_{0}})-\xi_{p}(P_{\delta})\cdot
u_{i_{0}}-\delta\big)^{p}-\big(h(P,u_{i_{0}})-\xi_{p}(P_{\delta})\cdot
u_{i_{0}}\big)^{p}\bigg]\\
&=\left[(1-(c_{n}\delta^{n}+...+c_{2}\delta^{2}))^{-\frac{p}{n}}-1\right]\left(\sum_{k=1}^{N}\alpha_{k}(h(P,u_{k})-\xi_{p}(P_{\delta})\cdot u_{k})^{p}\right)\\
&\qquad+\alpha_{i_{0}}\lambda^{p}\bigg[\big(h(P,u_{i_{0}})-\xi_{p}(P_{\delta})\cdot
u_{i_{0}}-\delta\big)^{p}-\big(h(P,u_{i_{0}})-\xi_{p}(P_{\delta})\cdot u_{i_{0}}\big)^{p}\bigg].
\end{split}
\end{equation*}

From the facts that $d_{0}=d(P)>h(P,u_{i_{0}})-\xi_{p}(P_{\delta})\cdot u_{i_{0}}>h(P,u_{i_{0}})-\xi_{p}(P_{\delta})\cdot u_{i_{0}}-\delta>0$, $0<p<1$ and the function $f(t)=t^{p}$ is concave on $[0,\infty)$, we have
$$
\big(h(P,u_{i_{0}})-\xi_{p}(P_{\delta})\cdot u_{i_{0}}-\delta\big)^{p}-\big(h(P,u_{i_{0}})-\xi_{p}(P_{\delta})\cdot
u_{i_{0}}\big)^{p}<(d_{0}-\delta)^{p}-d_{0}^{p}.
$$
Then,
\begin{equation}\tag{3.8}\label{Equation 3.8}
\begin{split}
B(\delta)&=(\lambda^{p}-1)\left(\sum_{k=1}^{N}\alpha_{k}\big(h(P,u_{k})-\xi_{p}(P_{\delta})\cdot u_{k}\big)^{p}\right)\\
&\qquad+\alpha_{i_{0}}\lambda^{p}\bigg[\big(h(P,u_{i_{0}})-\xi_{p}(P_{\delta})\cdot
u_{i_{0}}-\delta\big)^{p}-\big(h(P,u_{i_{0}})-\xi_{p}(P_{\delta})\cdot
u_{i_{0}}\big)^{p}\bigg]\\
&<\left[(1-(c_{n}\delta^{n}+...+c_{2}\delta^{2}))^{-\frac{p}{n}}-1\right]\left(\sum_{k=1}^{N}\alpha_{k}(h(P,u_{k})-\xi_{p}(P_{\delta})\cdot
u_{k})^{p}\right)\\
&\qquad+\alpha_{i_{0}}\lambda^{p}\big[(d_{0}-\delta)^{p}-d_{0}^{p}\big].
\end{split}
\end{equation}
On the other hand,
\begin{equation}\tag{3.9}\label{Equation 3.9}
\lim_{\delta\rightarrow0}\sum_{k=1}^{N}\alpha_{k}\big(h(P,u_{k})-\xi_{p}(P_{\delta})\cdot u_{k}\big)^{p}=\sum_{k=1}^{N}\alpha_{k}h(P,u_{k})^{p},
\end{equation}
\begin{equation}\tag{3.10}\label{Equation 3.10}
(d_{0}-\delta)^{p}-d_{0}^{p}<0,
\end{equation}
and
\begin{equation}\tag{3.11}\label{Equation 3.11}
\begin{split}
\lim_{\delta\rightarrow0}&\frac{(1-(c_{n}\delta^{n}+...+c_{2}\delta^{2}))^{-\frac{p}{n}}-1}{(d_{0}-\delta)^{p}-d_{0}^{p}}\\
&=\lim_{\delta\rightarrow0}\frac{(-\frac{p}{n})(1-(c_{n}\delta^{n}+...+c_{2}\delta^{2}))^{-\frac{p}{n}-1}(-nc_{n}\delta^{n-1}-...-2c_{2}\delta)}{p(d_{0}-\delta)^{p-1}(-1)}=0.
\end{split}
\end{equation}

From Equations (\ref{Equation 3.8}), (\ref{Equation 3.9}), (\ref{Equation 3.10}), (\ref{Equation 3.11}), and the fact that $0<p<1$, we have $B(\delta)<0$ for small enough $\delta>0$. From this and Equation (\ref{Equation 3.7}), there
exists a $\delta_{0}>0$ such that $P_{\delta_{0}}\in\mathcal{P}(u_{1},...,u_{N})$ and
$$
\Phi_{\lambda_{0}P_{\delta_{0}}}(\xi_{p}(\lambda_{0}P_{\delta_{0}}))<\Phi_{P}(\xi_{p}(P_{\delta_{0}}))\leq\Phi_{P}(\xi_{p}(P))=\Phi_{P}(o),
$$
where $\lambda_{0}=V(P_{\delta_{0}})^{-\frac{1}{n}}$. Let $P_{0}=\lambda_{0}P_{\delta_{0}}-\xi_{p}(\lambda_{0}P_{\delta_{0}})$, then
$P_{0}\in\mathcal{P}^{n}(u_{1},...,u_{N})$, $V(P_{0})=1$, $\xi_{p}(P_{0})=o$ and
\begin{equation}\tag{3.12}\label{Equation 3.12}
\Phi_{P_{0}}(o)<\Phi_{P}(o).
\end{equation}
This contradicts Equation (\ref{Equation 3.6}). Therefore, $P\in\mathcal{P}_{N}(u_{1},...,u_{N})$.
\end{proof}

\section{The $L_{p}$ Minkowski problem for polytopes ($0<p<1$)}

In this section, we prove the main theorem of this paper. We only need prove the following:

\begin{theorem}
If $p\in(0,1)$, $\alpha_{1},...,\alpha_{N}\in\mathbb{R}^{+}$, and the unit vectors $u_{1},...,u_{N}$ ($N\geq n+1$) are not concentrated on a closed hemisphere, then there exists a polytope $P_{0}$ such that
$$
S_{p}(P_{0},\cdot)=\sum_{k=1}^{N}\alpha_{k}\delta_{u_{k}}(\cdot).
$$
\end{theorem}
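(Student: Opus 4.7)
The plan is to extract the desired polytope from the extremizer of Lemma~\ref{Lemma 3.6}. Let $P\in\mathcal{P}_{N}(u_{1},\ldots,u_{N})$ be the polytope produced there, so that $V(P)=1$, $\xi_{p}(P)=o$, $P$ has exactly $N$ facets with outer unit normals $u_{1},\ldots,u_{N}$, and $P$ minimizes $\Phi_{Q}(\xi_{p}(Q))$ over $Q\in\mathcal{P}(u_{1},\ldots,u_{N})$ with $V(Q)=1$. Write $h_{k}=h(P,u_{k})>0$ and let $a_{k}>0$ denote the $(n-1)$-dimensional area of the facet of $P$ with outer unit normal $u_{k}$. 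Because $P$ has exactly $N$ facets, for all sufficiently small $t=(t_{1},\ldots,t_{N})\in\mathbb{R}^{N}$ the perturbed polytope
$$
P_{t}=\bigcap_{k=1}^{N}\{x: x\cdot u_{k}\leq h_{k}+t_{k}\}
$$
still lies in $\mathcal{P}_{N}(u_{1},\ldots,u_{N})$, and its support numbers and facet areas vary smoothly with $t$. The aim is to apply Lagrange multipliers to the constrained problem of minimizing $F(t):=\Phi_{P_{t}}(\xi_{p}(P_{t}))$ subject to $V(P_{t})=1$.

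The two derivatives needed are essentially standard. For the volume, $\partial V(P_{t})/\partial t_{k}\big|_{t=0}=a_{k}$. For $F$, I would apply an envelope-theorem argument: since $\xi_{p}(P)=o$ is the unique interior maximizer of the strictly concave function $\Phi_{P}$ (by Lemmas~\ref{Lemma 3.1} and~\ref{Lemma 3.2}), and $\xi_{p}(P_{t})\to o$ as $t\to 0$ by Lemma~\ref{Lemma 3.3}, a second-order expansion of each summand of $\Phi_{P_{t}}(\xi_{p}(P_{t}))$ around $(h_{k},o)$ yields
$$
F(t)=F(0)+p\sum_{k=1}^{N}\alpha_{k}h_{k}^{p-1}t_{k}-p\,\xi_{p}(P_{t})\cdot\sum_{k=1}^{N}\alpha_{k}h_{k}^{p-1}u_{k}+O(\|t\|^{2}+\|\xi_{p}(P_{t})\|^{2}).
$$
The critical-point condition $\nabla\Phi_{P}(o)=0$ is precisely $\sum_{k}\alpha_{k}h_{k}^{p-1}u_{k}=0$, which annihilates the cross term, so $\partial F/\partial t_{k}\big|_{t=0}=p\alpha_{k}h_{k}^{p-1}$. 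The Lagrange multiplier rule then yields some $\lambda\in\mathbb{R}$ with $p\alpha_{k}h_{k}^{p-1}=\lambda a_{k}$ for every $k$; positivity of $\alpha_{k}$, $h_{k}$, $a_{k}$ forces $\lambda>0$, and rearrangement gives
$$
S_{p}(P,\cdot)=\sum_{k=1}^{N}h_{k}^{1-p}a_{k}\,\delta_{u_{k}}=\frac{p}{\lambda}\sum_{k=1}^{N}\alpha_{k}\,\delta_{u_{k}}.
$$

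The final step is a dilation to absorb the multiplier: set $P_{0}=cP$ with $c=(\lambda/p)^{1/(n-p)}>0$, which is legitimate since $n-p>0$. Under dilation $h_{k}\mapsto ch_{k}$ and $a_{k}\mapsto c^{n-1}a_{k}$, so $h(P_{0},u_{k})^{1-p}a_{k}(P_{0})=c^{n-p}h_{k}^{1-p}a_{k}=\alpha_{k}$, giving $S_{p}(P_{0},\cdot)=\sum_{k}\alpha_{k}\delta_{u_{k}}$ as required; and since $o\in\Int(P)$ by Lemma~\ref{Lemma 3.2}, also $o\in\Int(P_{0})$, so the measure $S_{p}(P_{0},\cdot)$ is well-defined. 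The main technical obstacle is making the envelope-theorem step rigorous — specifically, arguing that the cross term in the expansion is genuinely $o(\|t\|)$, which requires that $\xi_{p}(P_{t})$ depend on $t$ with enough regularity at $t=0$. One can either verify this via the implicit function theorem applied to the critical-point equation $\nabla\Phi_{P_{t}}(\xi)=0$ (using strict concavity of $\Phi_{P}$ to guarantee a nondegenerate Hessian at $o$), or sidestep regularity altogether by deriving matching one-sided inequalities: $F(t)\geq\Phi_{P_{t}}(o)$ immediately gives the lower-bound expansion $F(t)-F(0)\geq p\sum_{k}\alpha_{k}h_{k}^{p-1}t_{k}+O(\|t\|^{2})$, and the corresponding upper bound follows from continuity of $\xi_{p}$ in Lemma~\ref{Lemma 3.3}, which together suffice for the Lagrange condition.
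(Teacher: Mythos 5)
Your proposal is correct and follows essentially the same route as the paper: extract the volume-normalized minimizer $P$ from Lemma~\ref{Lemma 3.6}, perturb the facet distances, use the interior critical-point identity $\sum_{k}\alpha_{k}h_{k}^{p-1}u_{k}=0$ for $\xi_{p}(P)=o$ to annihilate the cross term involving $\xi_{p}(P_{t})$, read off the Euler--Lagrange relation $\alpha_{k}\propto h_{k}^{1-p}a_{k}$, and dilate by the power $1/(n-p)$. The paper handles the regularity of $\xi_{p}(P_{t})$ exactly as you primarily propose --- it verifies that $\sum_{k}\frac{(1-p)\alpha_{k}}{h_{k}^{2-p}}u_{k}u_{k}^{T}$ is positive definite and applies the implicit function theorem to the stationarity equation for $\xi_{p}$ --- and it phrases the first-order condition via the explicit rescaling $\lambda(t)P_{t}$ rather than a Lagrange multiplier, which is the same computation in different notation.
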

\begin{proof}
From Lemma \ref{Lemma 3.6}, there exists a polytope $P\in\mathcal{P}_{N}(u_{1},...,u_{N})$ with $\xi_{p}(P)=o$ and $V(P)=1$ such that
$$
\Phi_{P}(o)=\inf\{\max_{\xi\in Q}\Phi_{Q}(\xi): Q\in\mathcal{P}(u_{1},...,u_{N})\textmd{ and }V(Q)=1\},
$$
where $\Phi_{Q}(\xi)=\sum_{k=1}^{N}\alpha_{k}(h(Q,u_{k})-\xi\cdot u_{k})^{p}$.

For $\delta_{1},...,\delta_{N}\in\mathbb{R}$, choose $|t|$ small enough so that the polytope $P_{t}$ defined by
$$
P_{t}=\bigcap_{i=1}^{N}\left\{x: x\cdot u_{i}\leq h(P,u_{i})+t\delta_{i}\right\}
$$
has exactly $N$ facets. Then,
$$
V(P_{t})=V(P)+t\left(\sum_{i=1}^{N}\delta_{i}a_{i}\right)+o(t),
$$
where $a_{i}$ is the area of $F(P,u_{i})$. Thus,
$$
\lim_{t\rightarrow0}\frac{V(P_{t})-V(P)}{t}=\sum_{i=1}^{N}\delta_{i}a_{i}.
$$
Let $\lambda(t)=V(P_{t})^{-\frac{1}{n}}$, then $\lambda(t)P_{t}\in\mathcal{P}_{N}^{n}(u_{1},...,u_{N})$, $V(\lambda(t)P_{t})=1$ and
\begin{equation}\tag{4.1}\label{Equation 4.1}
\lambda'(0)=-\frac{1}{n}\sum_{i=1}^{N}\delta_{i}S_{i}.
\end{equation}

Let $\xi(t)=\xi_{p}(\lambda(t)P_{t})$, and
\begin{equation}\tag{4.2}\label{Equation 4.2}
\begin{split}
\Phi(t)&=\max_{\xi\in\lambda(t)P_{t}}\sum_{k=1}^{N}\alpha_{k}(\lambda(t)h(P_{t},u_{k})-\xi\cdot u_{k})^{p}\\
&=\sum_{k=1}^{N}\alpha_{k}(\lambda(t)h(P_{t},u_{k})-\xi(t)\cdot u_{k})^{p}.
\end{split}
\end{equation}

From Equation (\ref{Equation 4.2}) and the fact that $\xi(t)$ is an interior point of $\lambda(t)P_{t}$, we have
\begin{equation}\tag{4.3}\label{Equation 4.3}
\sum_{k=1}^{N}\alpha_{k}\frac{u_{k,i}}{[\lambda(t)h(P_{t},u_{k})-\xi(t)\cdot u_{k}]^{1-p}}=0,
\end{equation}
for $i=1,...,n,$ where $u_{k}=(u_{k,1},...,u_{k,n})^{T}$. As a special case when $t=0$,
\begin{equation*}
\sum_{k=1}^{N}\alpha_{k}\frac{u_{k,i}}{h(P,u_{k})^{1-p}}=0,
\end{equation*}
for $i=1,...,n.$ Therefore,
\begin{equation}\tag{4.4}\label{Equation 4.4}
\sum_{k=1}^{N}\alpha_{k}\frac{u_{k}}{h(P,u_{k})^{1-p}}=0.
\end{equation}

Let
$$
F_{i}(t,\xi_{1},...,\xi_{n})=\sum_{k=1}^{N}\alpha_{k}\frac{u_{k,i}}{[\lambda(t)h(P_{t},u_{k})-(\xi_{1}u_{k,1}+...+\xi_{n}u_{k,n})]^{1-p}}
$$
for $i=1,...,n.$ Then,
$$
\frac{\partial F_{i}}{\partial\xi_{j}}\bigg|_{(0,...,0)}=\sum_{k=1}^{N}\frac{(1-p)\alpha_{k}}{h(P,u_{k})^{2-p}}u_{k,i}u_{k,j}.
$$
Thus,
$$
\left(\frac{\partial F}{\partial\xi}\bigg|_{(0,...,0)}\right)_{n\times n}=\sum_{k=1}^{N}\frac{(1-p)\alpha_{k}}{h(P,u_{k})^{2-p}}u_{k}\cdot
u_{k}^{T},
$$
where $u_{k}u_{k}^{T}$ is an $n\times n$ matrix.

Since $u_{1},...,u_{N}$ are not contained in a closed hemisphere, $\mathbb{R}^{n}=\Span\{u_{1},...,u_{N}\}$. Thus, for any $x\in\mathbb{R}^{n}$ with $x\neq0$, there exists a $u_{i_{0}}\in\{u_{1},...,u_{N}\}$ such that
$u_{i_{0}}\cdot x\neq0$. Then,
\begin{equation*}
\begin{split}
x^{T}\cdot\left(\sum_{k=1}^{N}\frac{(1-p)\alpha_{k}}{h(P,u_{k})^{2-p}}u_{k}\cdot u_{k}^{T}\right)\cdot x&=\sum_{k=1}^{N}\frac{(1-p)\alpha_{k}}{h(P,u_{k})^{2-p}}(x\cdot u_{k})^{2}\\
&\geq\frac{(1-p)\alpha_{i_{0}}}{h(P,u_{i_{0}})^{2-p}}(x\cdot u_{i_{0}})^{2}>0.
\end{split}
\end{equation*}
Thus, $(\frac{\partial F}{\partial\xi}\big|_{(0,...,0)})$ is positive defined. From this, Equations (\ref{Equation 4.3}) and the inverse function theorem, we
have
$$
\xi'(0)=(\xi_{1}'(0),...,\xi_{n}'(0))
$$
exists.

From the fact that $\Phi(0)$ is an extreme value of $\Phi(t)$ (in Equation (\ref{Equation 4.2})), Equation (\ref{Equation 4.1}) and Equation (\ref{Equation 4.4}), we have
\begin{equation*}
\begin{split}
0&=\Phi'(0)/p\\
&=\sum_{k=1}^{N}\alpha_{k}h(P,u_{k})^{p-1}\left(\lambda'(0)h(P,u_{k})+\delta_{k}-\xi'(0)\cdot u_{k}\right)\\
&=\sum_{k=1}^{N}\alpha_{k}h(P,u_{k})^{p-1}\left[-\frac{1}{n}\left(\sum_{i=1}^{N}a_{i}\delta_{i}\right)h(P,u_{k})+\delta_{k}\right]-\xi'(0)\cdot\left[\sum_{k=1}^{N}
\alpha_{k}\frac{u_{k}}{h(P,u_{k})^{1-p}}\right]\\
&=\sum_{k=1}^{N}\alpha_{k}h(P,u_{k})^{p-1}\delta_{k}-\left(\sum_{i=1}^{N}a_{i}\delta_{i}\right)\frac{\sum_{k=1}^{N}\alpha_{k}h(P,u_{k})^{p}}{n}\\
&=\sum_{k=1}^{N}\left(\alpha_{k}h(P,u_{k})^{p-1}-\frac{\sum_{j=1}^{N}\alpha_{j}h(P,u_{j})^{p}}{n}a_{k}\right)\delta_{k}.
\end{split}
\end{equation*}
Since $\delta_{1},...,\delta_{N}$ are arbitrary,
$$
\frac{\sum_{j=1}^{N}\alpha_{j}h(P,u_{j})^{p}}{n}h(P,u_{k})^{1-p}a_{k}=\alpha_{k},
$$
for all $k=1,...,N$. Let
$$
P_{0}=\left(\frac{\sum_{j=1}^{N}\alpha_{j}h(P,u_{j})^{p}}{n}\right)^{\frac{1}{n-p}}P,
$$
we have
$$
S_{p}(P_{0},\cdot)=\sum_{k=1}^{N}\alpha_{k}\delta_{u_{k}}(\cdot).
$$
\end{proof}

\section{The $L_{p}$ Minkowski problem for polytopes ($p\geq1$ with $p\neq n$)}

In \cite{HLYZ2}, Hug, et al. established a necessary and sufficient condition of the existence of the solution of the discrete $L_{p}$ Minkowski problem for the case where $p>1$ with $p\neq n$. In this section, we prove it by a different method. Moreover, this proof also includes a new approach to the classical Minkowski problem.

Let $p\geq1$, $\alpha_{1},...,\alpha_{N}\in\mathbb{R}^{+}$, the unit vectors $u_{1},...,u_{N}$ ($N\geq n+1$) are not concentrated on a closed hemisphere (in addition $\sum_{i=1}^{N}\alpha_{i}u_{i}=0$ for the case where $p=1$), $P\in\mathcal{P}(u_{1},...,u_{N})$, and $o\in P$. Define
$$
\Psi(P)=\sum_{i=1}^{N}\alpha_{i}h(P,u_{i})^{p}.
$$
Consider the extreme problem
\begin{equation}\tag{5.0}\label{Equation 5.0}
\inf\left\{\Psi(Q): Q\in\mathcal{P}(u_{1},...,u_{N}), V(Q)=1, o\in Q\right\}.
\end{equation}

 In this section, we prove that there exists a polytope $P$ with $u_{1},...,u_{N}$ as its unit facet normal vectors and $o\in\Int(P)$, which is the solution of problem (\ref{Equation 5.0}). Moreover, we prove that a dilatation of $P$ is the solution of the corresponding discrete $L_{p}$ Minkowski problem.

The following lemma will be needed.

\begin{lemma}\label{Lemma 5.1}
 Suppose the unit vectors $u_{1},...,u_{N}$ ($N\geq n+1$) are not concentrated on a closed hemisphere, $P\in\mathcal{P}(u_{1},...,u_{N})$ and $o\in P$. If there exists an $i_{0}$ ($1\leq i_{0}\leq N$) such that $h(P,u_{i_{0}})=0$ and $|F(P,u_{i_{0}})|>0$, then there exists a $\delta_{0}>0$ so that when $0<\delta<\delta_{0}$ the polytope
$$
P_{\delta}=\bigg(\bigcap_{i\neq i_{0}}H^{-}(P,u_{i})\bigg)\bigcap\{x: x\cdot u_{i_{0}}\leq\delta\}\in\mathcal{P}(u_{1},...,u_{N})
$$
and
$$
V(P_{\delta})=V(P)+(c_{n}\delta^{n}+...+c_{2}\delta^{2}+c_{1}\delta),
$$
where $c_{n},...,c_{2}$ are constants that depend on $P$ and $u_{i_{0}}$, and $c_{1}\neq0$.
\end{lemma}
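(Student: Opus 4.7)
My plan mirrors the proof of Lemma \ref{Lemma 3.5} but runs it in the ``outward'' direction: instead of slicing off a cap of $P$, I would \emph{grow} the polytope by pushing the supporting hyperplane through the origin (the one with outer normal $u_{i_0}$) outward by a distance $\delta$, adding a thin slab sitting on top of the facet $F(P,u_{i_0})$.

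The first step is to check that $P_\delta\in\mathcal{P}(u_1,\ldots,u_N)$ for all sufficiently small $\delta>0$. Boundedness is where the closed-hemisphere hypothesis bites: if $P_\delta$ admitted a recession direction $v\neq 0$, then $v\cdot u_i\leq 0$ for every $i\neq i_0$ and $v\cdot u_{i_0}\leq 0$, placing all of $u_1,\ldots,u_N$ in the closed hemisphere $\{u:u\cdot v\leq 0\}$, contrary to assumption. Since $P_\delta\supset P$, we have $V(P_\delta)>0$, and because only the $u_{i_0}$-constraint has been enlarged one checks $h(P_\delta,u_k)=h(P,u_k)$ for $k\neq i_0$; for $\delta$ small, the facet $F(P,u_{i_0})$ survives as a face of $P_\delta$, forcing $h(P_\delta,u_{i_0})=\delta$, so $P_\delta$ is indeed in $\mathcal{P}(u_1,\ldots,u_N)$.

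For the volume formula, I would invoke Cavalieri's principle: the added slab $P_\delta\setminus P$ satisfies
\[
V(P_\delta)-V(P)=\int_0^\delta g(t)\,dt,\qquad g(t):=V_{n-1}\bigl(P_\delta\cap\{x:x\cdot u_{i_0}=t\}\bigr),
\]
and for $0\leq t<\delta$ the slice $\{x:x\cdot u_i\leq h(P,u_i)\ \forall i\neq i_0,\ x\cdot u_{i_0}=t\}$ is independent of $\delta$. At $t=0$ this slice is precisely $F(P,u_{i_0})$, so $g(0)=|F(P,u_{i_0})|>0$. By Proposition 3.1 of \cite{GGR} (the same ingredient used in Lemma \ref{Lemma 3.5}), $g$ is piecewise polynomial of degree at most $n-1$; choosing $\delta_0$ smaller than the first breakpoint to the right of $t=0$ makes $g$ a single polynomial $g(t)=c'_{n-1}t^{n-1}+\cdots+c'_1 t+c'_0$ on $[0,\delta_0]$ with $c'_0=|F(P,u_{i_0})|$. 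Integrating term by term gives
\[
V(P_\delta)=V(P)+c_n\delta^n+\cdots+c_2\delta^2+c_1\delta,\qquad c_j=\tfrac{c'_{j-1}}{j},
\]
so in particular $c_1=|F(P,u_{i_0})|\neq 0$, as required.

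The main obstacle, really the only delicate point, is the bookkeeping that guarantees both (i) $P_\delta$ genuinely belongs to $\mathcal{P}(u_1,\ldots,u_N)$ with $h(P_\delta,u_{i_0})=\delta$ and no other facet of $P$ is absorbed, and (ii) the slice-area function $g$ is a \emph{single} polynomial on $[0,\delta_0]$ rather than merely piecewise polynomial. Both are resolved by shrinking $\delta_0$ below the finitely many ``combinatorial events'' (passes through a vertex, a face becoming redundant) that can occur as the slicing hyperplane sweeps upward; the hypothesis $|F(P,u_{i_0})|>0$ is precisely what makes the initial slice full-dimensional in $u_{i_0}^{\perp}$, which is exactly what forces the constant term of the polynomial for $g$ to be positive and therefore $c_1\neq 0$.
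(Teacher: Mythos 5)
Your proposal is correct and follows essentially the same route as the paper: slice along the $u_{i_0}$-direction, apply the piecewise-polynomial result of Gardner--Gritzmann (\cite{GGR}, Proposition~3.1) to the cross-sectional areas, and deduce $c_1=g(0)=|F(P,u_{i_0})|>0$ by integration. The paper streamlines the slice-independence observation by working with the fixed auxiliary polytope $P_1$ (the case $\delta=1$) before restricting to small $\delta$, and is more terse about boundedness of $P_\delta$, but the underlying argument is identical to yours.
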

\begin{proof}
By condition
$$
P_{1}=\bigg(\bigcap_{i\neq i_{0}}H^{-}(P,u_{i})\bigg)\bigcap\{x: x\cdot u_{i_{0}}\leq1\}\in\mathcal{P}(u_{1},...,u_{N}).
$$

Thus, (e.g., \cite{GGR}, Proposition 3.1) for $\delta\in\mathbb{R}$,
$$
g(\delta)=V_{n-1}(P_{1}\cap\{x: x\cdot v_{0}=\delta\})
$$
is a piecewise polynomial function of degree at most $n-1$. By conditions, $g(0)\neq0$. Thus, there exists a $\delta_{0}>0$ and $c_{n-1}',...,c_{1}', c_{0}'$ (depend on $P$ and $u_{i_{0}}$) such that $c_{0}'\neq0$ and when $0\leq\delta<\delta_{0}$
$$
g(\delta)=c_{n-1}'\delta^{n-1}+...+c_{1}'\delta+c_{0}'.
$$
Therefore, when $0\leq\delta<\delta_{0}$,
\begin{equation*}
\begin{split}
V(P_{\delta})&=V(P)+\int_{0}^{\delta}g(t)dt\\
&=V(P)+(c_{n}\delta^{n}+...+c_{2}\delta^{2}+c_{1}\delta),
\end{split}
\end{equation*}
where $c_{n}=c_{n-1}'/n,...,c_{2}=c_{1}'/2, c_{1}=c_{0}'$ are constants that depend on $P$ and $u_{i_{0}}$, and $c_{1}\neq0$.
\end{proof}

The following two lemmas solve problem (\ref{Equation 5.0}).
\begin{lemma}\label{Lemma 5.2}
Let $p\geq1$, $\alpha_{1},...,\alpha_{N}\in\mathbb{R}^{+}$, and the unit vectors $u_{1},...,u_{N}$ ($N\geq n+1$) are not concentrated on a closed hemisphere (in addition, $\sum_{i=1}^{N}\alpha_{i}u_{i}=0$ if $p=1$). Then, there exists a $P\in\mathcal{P}(u_{1},...,u_{N})$ with $o\in\Int(P)$ such that
$$
\Psi(P)=\inf\left\{\Psi(Q): Q\in\mathcal{P}(u_{1},...,u_{N}), V(Q)=1, o\in Q\right\}.
$$
\end{lemma}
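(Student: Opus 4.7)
The plan is to produce $P$ as the Hausdorff limit of a volume-normalized minimizing sequence and then rule out $o \in \d P$ by pushing the facet of $P$ that contains $o$ slightly outward.

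First I would pick a sequence $P_k \in \mathcal{P}(u_1, \ldots, u_N)$ with $V(P_k) = 1$, $o \in P_k$, and $\Psi(P_k)$ tending to the infimum. Since $o \in P_k$, the quantity $R(P_k)$ controls the diameters, and Lemma \ref{Lemma 3.4} (applied with our $p > 0$) rules out $R(P_k) \to \infty$, which would force $\Psi(P_k) \to \infty$. The Blaschke selection theorem then supplies a Hausdorff-convergent subsequence whose limit $P$ lies in $\mathcal{P}(u_1, \ldots, u_N)$ by the observation preceding Lemma \ref{Lemma 3.3}, satisfies $\Psi(P) = \inf$ and $V(P) = 1$ by continuity of support functions and volume, and contains $o$.

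The heart of the proof is to show $o \in \Int(P)$. Assume, for a contradiction, that $o \in \d P$. Because the outer unit normals of $P$ lie in $\{u_1, \ldots, u_N\}$, $o$ must lie on some facet $F(P, u_{i_0})$, forcing $h(P, u_{i_0}) = o \cdot u_{i_0} = 0$ and $|F(P, u_{i_0})| > 0$. I would push the $u_{i_0}$-facet outward by a small amount $\delta > 0$ to form
$$
P_\delta = \Bigl(\bigcap_{i \neq i_0} H^{-}(P, u_i)\Bigr) \cap \{x : x \cdot u_{i_0} \leq \delta\}.
$$
By Lemma \ref{Lemma 5.1}, $V(P_\delta) = 1 + c_1 \delta + O(\delta^2)$ with $c_1 = |F(P, u_{i_0})| > 0$. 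The other support numbers are unchanged and $h(P_\delta, u_{i_0}) = \delta$. Rescaling by $\lambda(\delta) = V(P_\delta)^{-1/n}$ preserves unit volume, admissibility, and the inclusion $o \in \lambda P_\delta$; together with $h(P, u_{i_0}) = 0$ and a Taylor expansion of $\lambda^p$, this yields
$$
\Psi(\lambda P_\delta) - \Psi(P) = -\frac{p c_1 \Psi(P)}{n}\, \delta + \alpha_{i_0} \delta^p + O(\delta^2).
$$

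The crux is the competition between the $\delta$-order rescaling gain and the $\delta^p$-order cost of the new facet. For $p > 1$ the $\delta^p$ and $O(\delta^2)$ terms are both $o(\delta)$, so the strictly negative linear term $-p c_1 \Psi(P) \delta / n$ wins and produces a competitor with $\Psi < \Psi(P)$, contradicting minimality; this is exactly where $p > 1$ is used. When $p = 1$ the two terms are of the same order and the perturbation can fail to decrease $\Psi$, so the proof must instead invoke the hypothesis $\sum_i \alpha_i u_i = 0$, which makes $\Psi$ translation invariant; any minimizer $P$ from the compactness step can then be translated by $-x_0$ for some $x_0 \in \Int(P)$ to place $o$ in the interior without changing $\Psi$ or $V$. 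Separating the critical exponent $p = 1$ from $p > 1$ is the main structural subtlety of the lemma.
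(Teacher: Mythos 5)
Your proposal is correct and follows essentially the same route as the paper: a volume-normalized minimizing sequence, compactness via Lemma \ref{Lemma 3.4} and Blaschke selection, translation invariance to dispose of the $p=1$ case, and an outward facet perturbation via Lemma \ref{Lemma 5.1} together with the rescaling $\lambda = V(P_\delta)^{-1/n}$ to contradict minimality when $p>1$. The only cosmetic difference is that you compare the linear gain $-\tfrac{p c_1 \Psi(P)}{n}\delta$ with the $\delta^{p}$ cost by Taylor expansion, whereas the paper computes the limit $\delta^{p}/\big((1+c_n\delta^n+\cdots+c_1\delta)^{-p/n}-1\big)\to 0$ by l'H\^opital; these are the same estimate.
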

\begin{proof}
Choose a $P_{0}\in\mathcal{P}(u_{1},...,u_{N})$ with $V(P_{0})=1$ and $o\in P$, then
\begin{equation}\tag{5.1}\label{Equation 5.1}
\inf\left\{\Psi(Q): Q\in\mathcal{P}(u_{1},...,u_{N}), V(Q)=1, o\in Q\right\}\leq\Psi(P_{0}).
\end{equation}

Choose a sequence $P_{k}\in\mathcal{P}(u_{1},...,u_{N})$ with $V(P_{k})=1$ and $o\in P_{k}$ such that $\Psi(P_{k})$ converges to $$
\inf\left\{\Psi(Q): Q\in\mathcal{P}(u_{1},...,u_{N}), V(Q)=1, o\in Q\right\}.
$$
We claim that $P_{k}$ is bounded. Otherwise from Lemma \ref{Lemma 3.4}, $\Psi(P_{k})$ is not bounded from above. This contradicts Equation (\ref{Equation 5.1}). Therefore, $P_{k}$ is bounded. From the Blaschke section theorem, there exists a subsequence of $P_{k}$ that converges to a polytope $P$ such that $P\in\mathcal{P}(u_{1},...,u_{N})$, $V(P)=1$, $o\in P$ and
\begin{equation}\tag{5.2}\label{Equation 5.2}
\Psi(P)=\inf\left\{\Psi(Q): Q\in\mathcal{P}(u_{1},...,u_{N}), V(Q)=1, o\in Q\right\}.
\end{equation}

 From conditions, if $p=1$, $P$, $Q$ contain the origin, and $P=Q+x$ for some $x\in\mathbb{R}^{n}$, then $\Psi(P)=\Psi(Q)$. Thus, we can assume $o$ is an interior point of $P$ (in (\ref{Equation 5.2})).

We next prove that when $p>1$ the origin is an interior point of $P$. Otherwise, there exists an $i_{0}$ ($1\leq i_{0}\leq N$) such that $h(P,u_{i_{0}})=0$ and $|F(P,u_{i_{0}})|>0.$

By Lemma \ref{Lemma 5.1}, we can choose $\delta>0$ small enough so that the polytope
$$
P_{\delta}=\bigg(\bigcap_{i\neq i_{0}}H^{-}(P,u_{i})\bigg)\bigcap\{x: x\cdot u_{i_{0}}\leq\delta\}\in\mathcal{P}(u_{1},...,u_{N})
$$
and
$$
V(P_{\delta})=1+(c_{n}\delta^{n}+...+c_{2}\delta^{2}+c_{1}\delta),
$$
where $c_{n},...,c_{1}$ are constants that depend on $P$ and $u_{i_{0}}$, and $c_{1}\neq0$.

Let $\lambda=V(P_{\delta})^{-\frac{1}{n}}$. From the hypothesis $h(P,u_{i_{0}})=0$,
\begin{equation*}
\begin{split}
\Psi(\lambda P_{\delta})&=\sum_{i=1}^{N}\alpha_{i}h(\lambda P_{\delta},u_{i})^{p}\\
&=\lambda^{p}\left[\sum_{i=1}^{N}\alpha_{i}h(P_{\delta},u_{i})^{p}\right]\\
&=\lambda^{p}\left[\sum_{i=1}^{N}\alpha_{i}h(P,u_{i})^{p}\right]+\lambda^{p}\alpha_{i_{0}}\big(h(P,u_{i_{0}})+\delta\big)^{p}-\lambda^{p}\alpha_{i_{0}}h(P,u_{i_{0}})^{p}\\
&=\sum_{i=1}^{N}\alpha_{i}h(P,u_{i})^{p}+(\lambda^{p}-1)\sum_{i=1}^{N}\alpha_{i}h(P,u_{i})^{p}+\lambda^{p}\alpha_{i_{0}}\delta^{p}\\
&=\Psi(P)+B_{1}(\delta),
\end{split}
\end{equation*}
where
\begin{equation*}
\begin{split}
B_{1}(\delta)&=(\lambda^{p}-1)\sum_{i=1}^{N}\alpha_{i}h(P,u_{i})^{p}+\lambda^{p}\alpha_{i_{0}}\delta^{p}\\
&=\left[\big(1+(c_{n}\delta^{n}+...+c_{2}\delta^{2}+c_{1}\delta)\big)^{-\frac{p}{n}}-1\right]\sum_{i=1}^{N}\alpha_{i}h(P,u_{i})^{p}+\lambda^{p}\alpha_{i_{0}}\delta^{p}.
\end{split}
\end{equation*}

Since $c_{1}\neq0$ and
\begin{equation*}
\begin{split}
\lim_{\delta\rightarrow0}&\frac{\delta^{p}}{(1+c_{n}\delta^{n}+...+c^{2}\delta^{2}+c_{1}\delta)^{-\frac{p}{n}}-1}\\
&=\lim_{\delta\rightarrow0}\frac{p\delta^{p-1}}{(-\frac{p}{n})(1+c_{n}\delta^{n}+...+c^{2}\delta^{2}+c_{1}\delta)^{-\frac{p}{n}-1}(nc_{n}\delta^{n-1}+...+c_{1})}=0,
\end{split}
\end{equation*}
$B_{1}(\delta)<0$ for small enough positive $\delta$. Thus
$$
\Psi(\lambda P_{\delta})<\Psi(P)
$$
for small enough positive $\delta$.  This contradicts Equation (\ref{Equation 5.2}). Therefore, the origin is an interior point of $P$.
\end{proof}

\begin{lemma}\label{Lemma 5.3}
The minimizing polytope in Lemma \ref{Lemma 5.2} has N facets.
\end{lemma}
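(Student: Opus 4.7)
The strategy mirrors the final step of Lemma \ref{Lemma 3.6}: argue by contradiction via a volume-preserving perturbation that strictly decreases the functional $\Psi$. Suppose $P$ has strictly fewer than $N$ facets. Then there exists $i_{0}\in\{1,\ldots,N\}$ with $V_{n-1}(F(P,u_{i_{0}}))=0$, while $h(P,u_{i_{0}})>0$ because $o\in\Int(P)$ by Lemma \ref{Lemma 5.2}. The goal is to produce a competitor in problem (\ref{Equation 5.0}) with strictly smaller $\Psi$-value, contradicting (\ref{Equation 5.2}).

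The natural competitor is obtained by shaving the lower-dimensional support set at $u_{i_{0}}$: for small $\delta>0$ set
\[
P_{\delta}=\bigg(\bigcap_{i\neq i_{0}}H^{-}(P,u_{i})\bigg)\cap\{x:\, x\cdot u_{i_{0}}\leq h(P,u_{i_{0}})-\delta\}\in\mathcal{P}(u_{1},\ldots,u_{N}).
\]
Because $V_{n-1}(F(P,u_{i_{0}}))=0$, Lemma \ref{Lemma 3.5} yields $V(P_{\delta})=1-(c_{n}\delta^{n}+\cdots+c_{2}\delta^{2})$, a volume loss of order $\delta^{2}$. Set $\lambda=V(P_{\delta})^{-1/n}=1+O(\delta^{2})$; since $o\in\Int(P)$, the origin remains interior to $\lambda P_{\delta}$ for small $\delta$, so $\lambda P_{\delta}$ is admissible in problem (\ref{Equation 5.0}).

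Because $p\geq1$, the function $t\mapsto t^{p}$ is monotone on $[0,\infty)$, and from $P_{\delta}\subset P$ together with $h(P_{\delta},u_{i_{0}})\leq h(P,u_{i_{0}})-\delta$ one obtains
\[
\Psi(P_{\delta})\leq\Psi(P)+\alpha_{i_{0}}\bigl[(h(P,u_{i_{0}})-\delta)^{p}-h(P,u_{i_{0}})^{p}\bigr].
\]
Taylor expanding at $h(P,u_{i_{0}})>0$, the bracket equals $-p\,h(P,u_{i_{0}})^{p-1}\delta+O(\delta^{2})$. Combined with $\lambda^{p}=1+O(\delta^{2})$, this gives
\[
\Psi(\lambda P_{\delta})=\lambda^{p}\Psi(P_{\delta})\leq\Psi(P)-p\,\alpha_{i_{0}}h(P,u_{i_{0}})^{p-1}\delta+O(\delta^{2}),
\]
which is strictly smaller than $\Psi(P)$ for all sufficiently small $\delta>0$, contradicting (\ref{Equation 5.2}). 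The only potential subtlety is the possibility of several sub-cases (whether $F(P,u_{i_{0}})$ is a vertex, an edge, or a higher-dimensional non-facet; whether $H^{-}(P,u_{i_{0}})$ is redundant in the half-space description of $P$); however, all of these fall under the single hypothesis $V_{n-1}(F(P,u_{i_{0}}))=0$, under which Lemma \ref{Lemma 3.5} applies uniformly. Hence the linear negative gain in $\Psi$ always dominates the $O(\delta^{2})$ terms, the argument works for every $p\geq1$ without case analysis, and therefore $P\in\mathcal{P}_{N}(u_{1},\ldots,u_{N})$.
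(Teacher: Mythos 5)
Your proof is correct and follows essentially the same route as the paper: shave the non-facet direction $u_{i_0}$, invoke Lemma~\ref{Lemma 3.5} to get a volume loss of order $\delta^{2}$, rescale to unit volume, and observe that the first-order drop in the support value at $u_{i_0}$ (possible since $h(P,u_{i_0})>0$ by Lemma~\ref{Lemma 5.2}) dominates. The only cosmetic difference is that you estimate via a Taylor expansion at $h(P,u_{i_0})$ while the paper computes a L'H\^opital limit ratio; your use of $P_\delta\subset P$ and monotonicity of $t^p$ to get $\Psi(P_\delta)\le\Psi(P)+\alpha_{i_0}[(h(P,u_{i_0})-\delta)^p-h(P,u_{i_0})^p]$ is a slightly more careful version of the paper's equality-based bookkeeping, but the core idea and conclusion are the same.
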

\begin{proof}
If the statement of the Lemma is not true, then there exists an $i_{0}\in\{1,...,N\}$ such that
$$
F(P,u_{i_{0}})
$$
is not a facet of $P$.

By Lemma \ref{Lemma 3.5}, we can choose $\delta>0$ small enough so that the polytope
$$
P_{\delta}=P\cap\{x: x\cdot u_{i_{0}}\leq h(P,u_{i_{0}})-\delta\}\in\mathcal{P}(u_{1},...,u_{N}).
$$
and
$$
V(P_{\delta})=1-(c_{n}\delta^{n}+...+c_{2}\delta^{2}),
$$
where $c_{n},...,c_{2}$ are constants that depend on $P$ and direction $u_{i_{0}}$.

Let
$$
\lambda=\lambda(\delta)=V(P_{\delta})^{-\frac{1}{n}}=\big(1-(c_{n}\delta^{n}+...+c_{2}\delta^{2})\big)^{-\frac{1}{n}},
$$
then
\begin{equation*}
\begin{split}
\Psi(\lambda P_{\delta})&=\sum_{i=1}^{N}\alpha_{i}h(\lambda P_{\delta},u_{i})^{p}\\
&=\lambda^{p}\sum_{i=1}^{N}\alpha_{i}h(P_{\delta},u_{i})^{p}\\
&=\sum_{i=1}^{N}\alpha_{i}h(P,u_{i})^{p}+(\lambda^{p}-1)\sum_{i=1}^{N}\alpha_{i}h(P,u_{k})^{p}+\alpha_{i_{0}}\lambda^{p}\left[\left(h(P,u_{i_{0}})-\delta\right)^{p}
-h(P,u_{i_{0}})^{p}\right]\\
&=\Psi(P)+B_{2}(\delta),
\end{split}
\end{equation*}
where,
\begin{equation*}
\begin{split}
B_{2}(\delta)&=(\lambda^{p}-1)\sum_{i=1}^{N}\alpha_{i}h(P,u_{i})^{p}+\alpha_{i_{0}}\lambda^{p}\left[\left(h(P,u_{i_{0}})-\delta\right)^{p}
-h(P,u_{i_{0}})^{p}\right]\\
&=\left[\left(1-(c_{n}\delta^{n}+...+c_{2}\delta^{2})\right)^{-\frac{p}{n}}-1\right]\left(\sum_{i=1}^{N}\alpha_{i}h(P,u_{i})^{p}\right)
+\alpha_{i_{0}}\lambda^{p}\left[(a_{0}-\delta)^{p}-a_{0}^{p}\right],
\end{split}
\end{equation*}
and $a_{0}=h(P,u_{i_{0}})$.

Since $(a_{0}-\delta)^{p}-a_{0}^{p}<0$ for small positive $\delta$ and
\begin{equation*}
\begin{split}
\lim_{\delta\rightarrow0}&\frac{\big(1-(c_{n}\delta^{n}+...+c_{2}\delta^{2})\big)^{-\frac{p}{n}}-1}{(a_{0}-\delta)^{p}-a_{0}^{p}}\\
&=\lim_{\delta\rightarrow0}\frac{-\frac{p}{n}\big(1-(c_{n}\delta^{n}+...+c_{2}\delta^{2})\big)^{-\frac{p}{n}-1}(-nc_{n}\delta^{n-1}-...-2c_{2}\delta)}{p(a_{0}-\delta)^{p-1}(-1)}=0,
\end{split}
\end{equation*}
there exists a $\delta_{0}>0$ such that $B_{2}(\delta)<0$ for all $0<\delta<\delta_{0}$. Thus,
$$
\Psi(\lambda P_{\delta})<\Psi(P)
$$
for all $0<\delta<\delta_{0}$. This contradicts Equation (\ref{Equation 5.2}). Therefore, $P\in\mathcal{P}_{N}(u_{1},...,u_{N})$.
\end{proof}

 Suppose $P$ is a polytope with $N$ facets whose outer unit normals are $u_{1},...,u_{N}$ and such that the facet with outer normal $u_{k}$ has area $a_{k}$. Obviously, for any $u\in S^{n-1}$
$$
\sum_{u_{k}\cdot u\geq0}(u_{k}\cdot u)a_{k}=-\sum_{u_{k}\cdot u\leq0}(u_{k}\cdot u)a_{k},
$$
and both equal to the ($n-1$)-dimensional volume of the projection of $P$ on $u^{\perp}$. Thus, for all $u\in S^{n-1}$
$$
\sum_{k=1}^{N}(u_{k}\cdot u)a_{k}=0.
$$

Therefore,
\begin{equation}\tag{5.3}\label{Equation 5.3}
\sum_{k=1}^{N}a_{k}u_{k}=0.
\end{equation}

Equation (\ref{Equation 5.3}) is a necessary condition for the existence of the solution of the discrete classical Minkowski problem. The following theorem shows that it is also the sufficient condition. Moreover, the following theorem is the necessary and sufficient conditions for the existence of the solution of the discrete $L_{p}$ Minkowski problem for the case where $p>1$ with $p\neq n$.

\begin{theorem}
If $p\geq1$ with $p\neq n$, $\alpha_{1},...,\alpha_{N}\in\mathbb{R}^{+}$, and the unit vectors $u_{1},...,u_{N}$ ($N\geq n+1$) are not concentrated on a closed hemisphere (in addition, $\sum_{i=1}^{N}\alpha_{i}u_{i}=0$ if $p=1$), then there exists a polytope $P_{0}$ such that
$$
S_{p}(P_{0},\cdot)=\sum_{k=1}^{N}\alpha_{k}\delta_{u_{k}}(\cdot).
$$
\end{theorem}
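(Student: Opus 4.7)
The plan is to run the same variational argument used in Section 4, anchored this time to the extremal problem (5.0) that was solved in Lemmas \ref{Lemma 5.2} and \ref{Lemma 5.3}. Those lemmas already provide a polytope $P \in \mathcal{P}_N(u_1,\ldots,u_N)$ with $V(P)=1$, $o \in \mathrm{Int}(P)$, and
$$
\Psi(P) = \inf\{\Psi(Q): Q \in \mathcal{P}(u_1,\ldots,u_N),\ V(Q)=1,\ o \in Q\}.
$$
The remaining task is to extract an Euler--Lagrange identity from this minimality and then rescale $P$ to obtain the desired $P_0$.

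For arbitrary real numbers $\delta_1,\ldots,\delta_N$ and small $|t|$, I would define the perturbation
$$
P_t = \bigcap_{i=1}^N \{x: x \cdot u_i \leq h(P,u_i) + t\delta_i\}.
$$
Because $P$ has exactly $N$ facets and $o$ lies in $\mathrm{Int}(P)$, for $|t|$ small enough $P_t$ belongs to $\mathcal{P}_N(u_1,\ldots,u_N)$, still contains $o$ in its interior, and has $h(P_t,u_i) = h(P,u_i) + t\delta_i$. The standard polytopal volume expansion then gives $V(P_t) = V(P) + t\sum_i a_i\delta_i + o(t)$, where $a_i$ denotes the area of $F(P,u_i)$. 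Setting $\lambda(t) = V(P_t)^{-1/n}$, the polytope $\lambda(t)P_t$ is admissible in (5.0) and $\lambda'(0) = -\frac{1}{n}\sum_i a_i \delta_i$.

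The function $t \mapsto \Psi(\lambda(t) P_t) = \lambda(t)^p \sum_{k=1}^N \alpha_k (h(P,u_k)+t\delta_k)^p$ attains its minimum at $t=0$, so differentiating and using $\lambda(0)=1$ yields
$$
0 = \sum_{k=1}^N \left[\alpha_k h(P,u_k)^{p-1} - \frac{\Psi(P)}{n} a_k\right]\delta_k.
$$
Since the $\delta_k$ are arbitrary, $\alpha_k = \frac{\Psi(P)}{n} h(P,u_k)^{1-p} a_k$ for every $k$. Setting
$$
P_0 = \left(\frac{\Psi(P)}{n}\right)^{\frac{1}{n-p}} P,
$$
which is well defined precisely because $p \neq n$, and tracking how support numbers ($h \mapsto ch$) and facet areas ($a \mapsto c^{n-1}a$) scale under the dilation $P \mapsto cP$, one checks that $S_p(P_0,\cdot) = \sum_k \alpha_k \delta_{u_k}(\cdot)$.

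The main subtlety is not the calculus but the requirement that the minimizer have $o$ in its interior, which is what legitimizes the unconstrained variation of all $N$ support numbers. For $p > 1$ this is supplied by Lemma \ref{Lemma 5.2} directly. The case $p=1$ is where the hypothesis $\sum_i \alpha_i u_i = 0$ becomes essential: it is exactly the condition that makes $\Psi$ translation-invariant, so the minimizer of Lemma \ref{Lemma 5.2} may be translated to have $o$ interior without changing $\Psi(P)$. The assumption $p \neq n$ enters only in the final rescaling step, reflecting the well-known obstruction at the critical exponent.
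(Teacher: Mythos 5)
Your argument is essentially identical to the paper's proof of this theorem: you invoke Lemmas 5.2 and 5.3 to obtain the volume-normalized minimizer of $\Psi$ with $o\in\mathrm{Int}(P)$ and all $N$ facets, perform the same perturbation $P_t$, normalize by $\lambda(t)=V(P_t)^{-1/n}$, differentiate at $t=0$, and rescale by $(\Psi(P)/n)^{1/(n-p)}$ at the end. Your explanatory remarks about why $o\in\mathrm{Int}(P)$ matters and where $p=1$ (translation invariance via $\sum_i\alpha_i u_i=0$) and $p\neq n$ (the rescaling exponent) enter are also exactly the points the paper relies on.
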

\begin{proof}
By Lemma \ref{Lemma 5.2} and Lemma \ref{Lemma 5.3}, there exists a polytope $P\in\mathcal{P}_{N}(u_{1},...,u_{N})$ with $o\in\Int(P)$ and $V(P)=1$ such that
$$
\Psi(P)=\inf\left\{\Psi(Q): Q\in\mathcal{P}(u_{1},...,u_{N}), V(Q)=1, o\in Q\right\}.
$$

For $\delta_{1},...,\delta_{N}\in\mathbb{R}$, choose $|t|$ small enough so that the polytope $P_{t}$ defined by
$$
P_{t}=\bigcap_{i=1}^{N}\left\{x: x\cdot u_{i}\leq h(P,u_{i})+t\delta_{i}\right\}
$$
has exactly $N$ facets. Then,
$$
V(P_{t})=V(P)+t\left(\sum_{i=1}^{N}\delta_{i}a_{i}\right)+o(t),
$$
where $a_{i}$ is the area of $F(P,u_{i})$. Thus,
$$
\lim_{t\rightarrow0}\frac{V(P_{t})-V(P)}{t}=\sum_{i=1}^{N}\delta_{i}a_{i}.
$$
Let $\lambda(t)=V(P_{t})^{-\frac{1}{n}}$, then $\lambda(t)P_{t}\in\mathcal{P}_{N}^{n}(u_{1},...,u_{N})$, $o\in\Int(\lambda(t)P_{t})$, $V(\lambda(t)P_{t})=1$ and
\begin{equation*}
\lambda'(0)=-\frac{1}{n}\sum_{i=1}^{N}\delta_{i}S_{i}.
\end{equation*}

Let
$$
\Psi(t)=\Psi(\lambda(t)P_{t})=\sum_{i=k}^{N}\alpha_{k}\big(\lambda(t)h(P_{t},u_{k})\big)^{p}.
$$

From the fact that $\Psi(0)$ is an extreme value of $\Psi(t)$, we have
\begin{equation*}
\begin{split}
0&=\Psi'(0)/p\\
&=\sum_{k=1}^{N}\alpha_{k}h(P,u_{k})^{p-1}\left(\lambda'(0)h(P,u_{k})+\delta_{k}\right)\\
&=\sum_{k=1}^{N}\alpha_{k}h(P,u_{k})^{p-1}\left[-\frac{1}{n}\left(\sum_{i=1}^{N}a_{i}\delta_{i}\right)h(P,u_{k})+\delta_{k}\right]\\
&=\sum_{k=1}^{N}\alpha_{k}h(P,u_{k})^{p-1}\delta_{k}-\left(\sum_{i=1}^{N}a_{i}\delta_{i}\right)\frac{\sum_{k=1}^{N}\alpha_{k}h(P,u_{k})^{p}}{n}\\
&=\sum_{k=1}^{N}\left(\alpha_{k}h(P,u_{k})^{p-1}-\frac{\sum_{j=1}^{N}\alpha_{j}h(P,u_{j})^{p}}{n}a_{k}\right)\delta_{k}.
\end{split}
\end{equation*}
Since $\delta_{1},...,\delta_{N}$ are arbitrary,
$$
\frac{\sum_{j=1}^{N}\alpha_{j}h(P,u_{j})^{p}}{n}h(P,u_{k})^{1-p}a_{k}=\alpha_{k},
$$
for all $k=1,...,N$. Let
$$
P_{0}=\left(\frac{\sum_{j=1}^{N}\alpha_{j}h(P,u_{j})^{p}}{n}\right)^{\frac{1}{n-p}}P,
$$
we have
$$
S_{p}(P_{0},\cdot)=\sum_{k=1}^{N}\alpha_{k}\delta_{u_{k}}(\cdot).
$$
\end{proof}

\end{document}